   \def\obs#1{{\bf (*** #1 ***)} }
 \def\obs#1{}     
\newtheorem{teo}{Theorem}[section]
\newtheorem{defi}[teo]{Definition}
\newtheorem{lema}[teo]{Lemma}
\newtheorem{cor}[teo]{Corollary}
\newtheorem{prop}[teo]{Proposition}
\newtheorem{exe}[teo]{Example}
\newtheorem{rem}[teo]{Remark}
\newcommand{\X}{{\mathbb X}}
\newcommand{\Y}{{\mathbb Y}}
\newcommand{\R}{{\mathbb R}}
\newcommand{\ex}{{\exists}}
\newcommand{\m}{{}^{-1}}
\newcommand{\mt}{\mapsto}
\def\ndv{\ {\mid \kern -0.7 em {\scriptstyle \not}} \ \ }
\def\nd{\ {\mid \kern -0.4 em {\scriptstyle \not}} \ \ }
\newcommand{\N}{{\mathbb N}}
\numberwithin{equation}{section}
\title{The open mapping principle for partial actions of Polish groups}
\begin{document}
\author[J.\ G\'omez ]{J. G\'omez }
\address{Escuela de Matem\'aticas, Universidad Industrial de Santander, Cra. 27 Calle 9 UIS Edificio 45\\  Bucaramanga, Colombia}\email{jorge.gomez14@correo.uis.edu.co }

\author[H.\ Pinedo ]{H. Pinedo }
\address{Escuela de Matem\'aticas, Universidad Industrial de Santander, Cra. 27 Calle 9 UIS Edificio 45\\  Bucaramanga, Colombia}\email{hpinedot@uis.edu.co }
\author[C.\ Uzcategui ]{C. Uzc\'ategui }
\address{Escuela de Matem\'aticas, Universidad Industrial de Santander, Cra. 27 Calle 9 UIS Edificio 45\\  Bucaramanga, Colombia}\email{cuzcatea@uis.edu.co}
\date{\today}
\thanks{The authors thank La Vicerrector\'ia de Investigaci\'on y Extensi\'on de la Universidad Industrial de Santander for the financial support for this work,  which is part  of the VIE project \# 5761.}
\keywords{Partial action, Polish group,  Polish space, open mapping theorem.}
\subjclass[msc2010]{54H15, 54E50, 54E35}

\begin{abstract}  
We present a extension of the classical open mapping principle and Effros' theorem  for Polish group actions to the context of partial group actions. 
	
\end{abstract}
\maketitle
\section{Introduction}

Let $a:G\times \X\rightarrow \X$ be a continuous  action of a Polish group $G$ on a non meager,  separable metrizable space $\X$. The open mapping principle states that, if $a$ is transitive, then the evaluation map $g\mapsto a(g,x)$ is open, for each $x\in\X$. The first proof of this result is due to Effros \cite{EF}, since then several other proofs has been found \cite{AN, GA, HO,VM}.   The original statement of Effros' theorem says  that for a Polish space $\X$, the orbit of  point $G\cdot x$ is Polish iff  $\X$ is homeomorphic to the coset space $G/G_x$ (where $G_x$ is the stabilizer of $x$).  This result is fundamental for the classification of orbit equivalence relations \cite{GA}. Moreover, motivated by  Effros' theorem,  the question of which Polish spaces admit a transitive action of a Polish group (and therefore are homeomorphic to a coset space)  has been investigated   \cite{vanMill2008}.  Effros' results turned out to have an important influence  upon the development of the theory of homogeneous continua \cite{CM}. Generalization of  Effros' theorem are  discussed in \cite{LU,Ostaszew2015}.

Given an action $a:G\times \Y\rightarrow \Y$ of a group $G$ over a set $\Y$ and an invariant  subset $\mathbb{X}$ of $\Y$ (i.e. $a(g,x)\in \mathbb{X},$ for all $x\in \mathbb{X}$ and $g\in G$),  the restriction of $a$ to $G\times\mathbb{X}$ is an action of $G$ over $\mathbb{X}$. However, if $\mathbb{X}$ is not invariant,  we get what is called a {\em partial action} on $\mathbb{X}$: a collection of partial maps $\{m_g\}_{g\in G}$ on  $\mathbb{X}$ satisfying $m_1={\rm id}_\mathbb{X}$ and $m_g\circ m_h\subseteq m_{gh},$ for all $g,h\in G$.
The notion of partial action of a group    is a weakening of classical group actions and was introduced by  R. Exel in \cite{E1} motivated by  problems arising from $C^*$-algebras, in particular, they have been useful  to endow relevant classes of $C$*-algebras
with a general structure of a partial crossed product (see for instance \cite{ABMP, EV}). Recently, partial actions of groups have been considered in many different contexts, \cite[ p. 89]{KL}, moreover,  they have been an efficient tool to develop a new cohomological theory \cite{DKh, P}.    

In the topological context, partial actions were introduced in \cite{AB,KL}. They consist of a family of homeomorphism between a pair of open subsets of the  spaces.  A natural question is whether  a partial action  of a group $G$ can be realized as a restriction of a  global action of $G$. This problem was studied  by Abadie \cite{AB} and independently  by J. Kellendonk and M. Lawson  \cite{KL}.  They showed that for any continuous partial action $m$ of a topological group $G$ on a topological space $\mathbb{X}$, there is a topological space $\Y$ and a continuous action $a$ of $G$ on  $\Y$ such that $\mathbb{X}$ is a subspace of $\Y$ and $m$ is the restriction of $a$ to $\mathbb{X}$.  Such a space $\Y$ is called a globalization of $\mathbb{X}$. They also show that there is a minimal globalization $\mathbb{X}_G$ called the enveloping space of $\mathbb{X}$.
Partial actions of Polish groups on Polish spaces were studied in  \cite{GPU1,PU,PU2}. In particular,  sufficient conditions for the space $\X_G$ to be Polish were found (see  Theorem \ref{xgpol} below).

The main purpose of this paper is to show the open mapping principle  and Effros' theorem in the setting of partial actions of Polish groups  (see Theorems  \ref{Teo.m_x.abierta}, \ref{ef}  and \ref{ef2}).  It is worth to notice that in the case the space  $\X_G$ is  metrizable, the open mapping principle for partial actions can be obtained as a corollary  of the corresponding result for  global actions (see Remark \ref{glometric}).

\section{Preliminaries}
Throughout this work $G$ will be a topological  group  with identity $1,$ $\X$ a topological space unless it is said otherwise.  A Polish space  is a topological space which is is separable and completely metrizable, and a Polish group is a topological group whose topology is Polish. We use \cite{GA,KE} as a general reference for descriptive set theoretic notions and results. 

We recall the definition of a partial action. Let   $m\colon  G\times \X\to \X,\,\,(g,x)\mt m(g,x)=g\cdot x\in \X,$ be a partially defined function.
As usual, we write $\ex g\cdot x$ to mean that $(g,x)$ is in the domain of $m.$ Then $m$  is called a (set theoretic) {\it partial action} of $G$ on $\X,$ if for all $g,h\in G$ and $x\in \X$ we have:
\smallskip

\noindent (PA1) $\ex g\cdot x$ implies $\ex g\m\cdot (g\cdot x)$ and $g\m\cdot (g\cdot x)=x,$
\smallskip

\noindent (PA2)  $\ex g \cdot (h\cdot x)$ implies $\ex (g h)\cdot x$ and  $g \cdot (h\cdot x)=(g h)\cdot x,$
\smallskip

\noindent (PA3) $\ex 1\cdot x,$ and $1\cdot x=x$.
 \smallskip

Let $x\in\X.$ Then  $G^x\cdot x=\{g\cdot x\mid g\in G^x\}$  and $G_x=\{g\in G^x\mid g\cdot x=x\},$ are {\it orbit} and  the {\it stabilizer} of $x,$ respectively. 
We say that a partial action $m$ is {\it transitive}, if given $x,y\in \X$ there exists $g\in G$ such that $x\in \X_{g\m}$ and $g\cdot x=y$. Equivalently, $m$ is transitive, if and only if, $G^x\cdot x=\X,$ for all $x\in \X.$

The following  lemma is   clear.
\begin{lema}\label{gxg} Let $m$ be  a partial action of  $G$ in $\X$ and  $(g,x)\in G*\X.$ Then
\begin{itemize}
\item[(i)] For  $h\in G,$ if   $g^{-1}h\in G_x$ then $h\in G^x.$
\item[(ii)] The map  $r_{g\m}:G\ni h\mapsto hg\m\in   G$  is a homeomorphism with $r_{g\m}(G^x)=G^{g\cdot x}.$ In particular, if $m$ is transitive   and $G^x$ is open,  then $G^y$ is open, for all $y\in \X.$ 
\end{itemize}

\end{lema}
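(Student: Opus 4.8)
The plan is to derive both items directly from the axioms (PA1)--(PA3), the only point requiring attention being careful bookkeeping of which pairs actually lie in the domain $G*\X$ of $m$.

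For (i), I would use that $(g,x)\in G*\X$ gives $\ex g\cdot x$, while $g\m h\in G_x$ means both $\ex (g\m h)\cdot x$ and $(g\m h)\cdot x=x$. Substituting the identity $x=(g\m h)\cdot x$ into $\ex g\cdot x$ yields $\ex g\cdot\big((g\m h)\cdot x\big)$, and then (PA2) gives $\ex\big(g(g\m h)\big)\cdot x=\ex h\cdot x$, that is, $h\in G^x$. (This also shows $h\cdot x=g\cdot x$, which is not needed here.)

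For (ii), the map $r_{g\m}\colon h\mapsto hg\m$ is right translation on $G$, hence a homeomorphism with continuous inverse $h\mapsto hg$. To prove $r_{g\m}(G^x)=G^{g\cdot x}$ I would show the two inclusions. For $\subseteq$: take $h\in G^x$, so $\ex h\cdot x$; applying (PA1) to $(g,x)$ gives $\ex g\m\cdot(g\cdot x)$ with $g\m\cdot(g\cdot x)=x$, so substituting into $\ex h\cdot x$ gives $\ex h\cdot\big(g\m\cdot(g\cdot x)\big)$, and (PA2) yields $\ex(hg\m)\cdot(g\cdot x)$, i.e.\ $hg\m\in G^{g\cdot x}$. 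For $\supseteq$: take $k\in G^{g\cdot x}$, so $\ex k\cdot(g\cdot x)$; since $\ex g\cdot x$, (PA2) gives $\ex(kg)\cdot x$, hence $kg\in G^x$ and $k=(kg)g\m\in r_{g\m}(G^x)$.

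For the final clause, assume $m$ transitive and $G^x$ open, and let $y\in\X$ be arbitrary. Transitivity ($G^x\cdot x=\X$) provides $g\in G^x$ with $g\cdot x=y$, so $(g,x)\in G*\X$ and part (ii) gives $G^y=G^{g\cdot x}=r_{g\m}(G^x)$, which is open as the image of an open set under the homeomorphism $r_{g\m}$. I do not expect a genuine obstacle: the whole argument is a chain of applications of (PA1) and (PA2), and the only thing to watch is that each intermediate expression is legitimately defined, which is precisely what those axioms guarantee.
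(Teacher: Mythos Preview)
Your argument is correct: each step is a direct application of (PA1)--(PA3) with the required domain checks, and you handle both inclusions for $r_{g\m}(G^x)=G^{g\cdot x}$ cleanly. The paper itself gives no proof (it simply declares the lemma ``clear''), and what you have written is precisely the routine verification the authors had in mind.
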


 \smallskip

We consider $G\times \X$ with the product topology. The domain of the partial action is the set $G* \X=\{(g,x)\in G\times \X\mid \ex g\cdot x \}$ endowed  with  the induced topology. For $g\in G$ and $x\in \X$ write  $\X_{g\m}=\{ x\in \X\mid \ex g\cdot x\},$ and $G^x=\{g\in G\mid \ex g\cdot x\}.$    Then we obtain the  family of maps $m_g\colon \X_{g\m}\ni x\mt g\cdot x\in\X_g,$ for all $g\in G.$
By \cite[Lemma 1.2]{QR} a  partial action $m\colon G*\X\to\X$  can equivalently be formulated in terms  of a family of bijections $m=\{m_g\colon \X_{g\m}\to \X_g\}_{g\in G}$ in the following sense.

\begin{prop}  \label{fam} A partial action $m$ of $G$ on $\X$ is a family $m=\{m_g\colon \X_{g\m}\to\X_g\}_{g\in G},$ where $\X_g\subseteq \X,$  $m_g\colon\X_{g\m}\to \X_g$ is bijective, for  all $g\in G,$  and such that:
\begin{itemize}
\item[(i)]$\X_1=\X$ and $m_1=\rm{id}_\X;$
\item[(ii)]  $m_g( \X_{g\m}\cap \X_h)=\X_g\cap \X_{gh};$
\item[(iii)] $m_gm_h\colon \X_{h\m}\cap  \X_{ h\m g\m}\to \X_g\cap \X_{gh},$ and $m_gm_h=m_{gh}$ in $ \X_{h\m}\cap  \X_{g\m h\m};$
\end{itemize}
for all $g,h\in G.$
\end{prop}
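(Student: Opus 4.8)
The plan is to prove the two implications separately; the only step with genuine content is an elementary cancellation identity that compensates for the one-directional form of the axioms.

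\emph{From \textup{(PA1)--(PA3)} to a family of bijections.} Given a set-theoretic partial action $m\colon G*\X\to\X$, keep $\X_{g\m}=\{x\mid\ex g\cdot x\}$ as in the text, put $\X_g:=m_g(\X_{g\m})$, and set $m_g(x)=g\cdot x$. By (PA1) we get $g\m\cdot(g\cdot x)=x$ for $x\in\X_{g\m}$ and, symmetrically, $g\cdot(g\m\cdot y)=y$ for $y\in\X_g$, so $m_{g\m}$ is a two-sided inverse of $m_g$ and each $m_g\colon\X_{g\m}\to\X_g$ is bijective; in particular (i) is (PA3). For the inclusion $\subseteq$ in (ii): if $x\in\X_{g\m}\cap\X_h$, say $x=h\cdot y$, then $g\cdot x=g\cdot(h\cdot y)=(gh)\cdot y\in\X_{gh}$ by (PA2). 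For (iii): the domain of $m_g\circ m_h$ is $\{x\in\X_{h\m}\mid h\cdot x\in\X_{g\m}\}$, i.e. the set of $x$ with $\ex g\cdot(h\cdot x)$; on it (PA2) gives $\ex(gh)\cdot x$ and $g\cdot(h\cdot x)=(gh)\cdot x$, that is $m_gm_h=m_{gh}$. Using that $m_{h\m}$ inverts $m_h$, this domain equals $m_{h\m}(\X_h\cap\X_{g\m})$, and the image of $m_{gh}$ on it is $m_{gh}(\X_{(gh)\m}\cap\X_{h\m})$; both are rewritten in the desired form $\X_{h\m}\cap\X_{h\m g\m}$, resp.\ $\X_g\cap\X_{gh}$, by (ii) with $g,h$ suitably relabelled. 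So it remains to prove the reverse inclusion in (ii).

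\emph{The cancellation identity.} I first record: if $\ex(gh)\cdot x$ and $\ex h\cdot x$, then $\ex g\cdot(h\cdot x)$ and $g\cdot(h\cdot x)=(gh)\cdot x$. Indeed, (PA1) lets us write $x=h\m\cdot(h\cdot x)$, so $\ex(gh)\cdot\bigl(h\m\cdot(h\cdot x)\bigr)$, and applying (PA2) to the triple $(gh,\,h\m,\,h\cdot x)$ gives $\ex\bigl((gh)h\m\bigr)\cdot(h\cdot x)=g\cdot(h\cdot x)$ together with $g\cdot(h\cdot x)=(gh)\cdot\bigl(h\m\cdot(h\cdot x)\bigr)=(gh)\cdot x$. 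Now take $w\in\X_g\cap\X_{gh}$; put $x:=g\m\cdot w\in\X_{g\m}$, so $g\cdot x=w$, and write $w=(gh)\cdot y$, so $\ex(h\m g\m)\cdot w$ by (PA1). Applying the cancellation identity to $\ex(h\m g\m)\cdot w$ and $\ex g\m\cdot w$ yields $\ex h\m\cdot(g\m\cdot w)=\ex h\m\cdot x$, hence $x\in\X_h$; since also $x\in\X_{g\m}$ and $m_g(x)=w$, this gives $\X_g\cap\X_{gh}\subseteq m_g(\X_{g\m}\cap\X_h)$, completing (ii).

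\emph{From a family of bijections to \textup{(PA1)--(PA3)}.} Conversely, given $\{m_g\}_{g\in G}$ with (i)--(iii), declare $\ex g\cdot x$ iff $x\in\X_{g\m}$ and set $g\cdot x:=m_g(x)$. Then (PA3) is (i). Putting $g\m,g$ in place of $g,h$ in (iii) gives $m_{g\m}m_g=m_1=\mathrm{id}$ on $\X_{g\m}\cap\X_1=\X_{g\m}$; hence $\ex g\cdot x$ forces $g\cdot x=m_g(x)\in\X_g$, so $\ex g\m\cdot(g\cdot x)$ and $g\m\cdot(g\cdot x)=x$, which is (PA1). Finally, if $\ex g\cdot(h\cdot x)$ then $x\in\X_{h\m}$ and $m_h(x)\in\X_{g\m}$, so $x$ lies in the domain of $m_g\circ m_h$, which by (iii) equals $\X_{h\m}\cap\X_{h\m g\m}\subseteq\X_{(gh)\m}$; thus $\ex(gh)\cdot x$, and (iii) gives $g\cdot(h\cdot x)=m_gm_h(x)=m_{gh}(x)=(gh)\cdot x$, which is (PA2).

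\emph{Main obstacle.} Apart from the cancellation identity, the proof is bookkeeping with the conventions $\X_{g\m}=\operatorname{dom}m_g$, $\X_g=\operatorname{ran}m_g$ and with the relabellings of $g,h$ needed to quote (ii)/(iii). The identity is the one point where (PA1) must be used to re-express $x$ before (PA2) becomes applicable, and it is exactly what drives the reverse inclusion in (ii) and, through it, the domain and image computations in (iii). (This is in essence \cite[Lemma 1.2]{QR}.)
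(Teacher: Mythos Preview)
Your argument is correct. The paper itself does not prove this proposition; it merely cites \cite[Lemma~1.2]{QR} and states the result, so there is no in-text proof to compare against. Your write-up is precisely the standard verification behind that citation (you even acknowledge this), with the cancellation step---rewriting $x=h\m\cdot(h\cdot x)$ before invoking (PA2)---being the only non-bookkeeping ingredient, exactly as you identify.
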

 
\begin{defi}\label{topar}
A topological partial action of  $G$ on  $\X$ is a partial action $m=\{m_g\colon \X_{g\m}\to\X_g\}_{g\in G}$ on the underlying set $\X$, such that each $\X_g$ is open in $\X$, and each $m_g$ is a homeomorphism. A {\em continuous partial action} is a topological partial action $m\colon G*\X\to \X$ which is  continuous.
\end{defi}

\smallskip

Now we recall  the definition of the enveloping action in the topological sense.
Let $m$ be a topological partial action of $G$ on $\X.$ Define the following
equivalence relation on $ G\times \X$:
\begin{equation*}
\label{eqgl}
(g,x)R  (h,y) \Longleftrightarrow x\in \X_{g\m h}\,\,\,\, \text{and}\, \,\,\, m_{h\m g}(x)=y,
\end{equation*}
and denote by  $[g,x]$   the equivalence class of the pair $(g,x).$
The {\it enveloping space} of  $\X$ is the set $\X_G=(G\times \X)/R$  endowed with the quotient topology. Let
\begin{equation}\label{que}
q\colon G\times\X\ni (g,x)\to[g,x]\in \X_G
\end{equation} 
 be the quotient map.  An action of $G$ on $\X_G$ is defined by   
\begin{equation}
\label{action}
\mu \colon G\times \X_G\ni (g,[h,x])\to [gh,x]\in \X_G.
\end{equation}
The  map
\begin{equation}
\label{iota}
\iota \colon \X\ni x\mt [1,x]\in \X_G
\end{equation}
induces a morphism  $\iota\colon m\to \mu$  in the category  of topological partial actions of $G$ (see \cite[page 17]{AB}). Next theorem collects some facts that will be used in the sequel. 

\begin{teo}
\label{Abadie} \cite[Theorem 1.1]{AB} and  \cite[Theorem 3.9]{KL}. Let $m$ be a continuous partial action of $G$ on $\mathbb{X}$. Then
\begin{enumerate}
		\item $q$ is continuous and open. 
		\item $\mu$ is a continuous action of $G$ on $\X_G$. 
		\item  $\iota : \mathbb{X}\rightarrow  \iota(\mathbb{X})$ is a homeomorphism.
		\item If $G*\X$ is open, then $\iota(\X)$ is open.
\end{enumerate}
\end{teo}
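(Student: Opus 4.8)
The plan is to prove the four items in order, letting the openness of $q$ (item 1) carry most of the weight for the rest. For (1), continuity of $q$ is automatic from the definition of the quotient topology, so only openness requires an argument. Fixing an open $U\subseteq G\times\X$, I would show that the saturated set $q\m(q(U))$ is open. The key point is that for each $t\in G$ a pair $(h,y)$ with $y\in\X_{t\m}$ is $R$-equivalent to $(ht\m,m_t(y))$; unwinding the definition of $R$ shows that $(h,y)\in q\m(q(U))$ precisely when $(ht\m,m_t(y))\in U$ for some $t\in G$. Hence, letting $\Phi_t\colon G\times\X_{t\m}\to G\times\X$ be the map $(h,y)\mapsto(ht\m,m_t(y))$, which is continuous because group multiplication and the homeomorphism $m_t$ are, one gets $q\m(q(U))=\bigcup_{t\in G}\Phi_t\m(U)$. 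Since each $\X_{t\m}$ is open in $\X$, each $\Phi_t\m(U)$ is open in $G\times\X$, and so is the union.

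For (2), that $\mu$ is well defined and satisfies the action axioms is a direct computation (well-definedness uses $(gh)\m(gh')=h\m h'$). For continuity, since $q$ is a continuous open surjection, so is $\mathrm{id}_G\times q\colon G\times(G\times\X)\to G\times\X_G$, hence it is a quotient map; and $\mu\circ(\mathrm{id}_G\times q)$ is the composite $(g,h,x)\mapsto(gh,x)\mapsto q(gh,x)$, which is continuous, so $\mu$ is continuous.

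For (3) and (4) I would first record the identity $q\m(\iota(\X))=G*\X$, which holds because $(g,x)R(1,y)$ for some $y$ if and only if $x\in\X_{g\m}$. As $q$ is a quotient map, (4) follows at once. For (3): $\iota$ is continuous, being $q$ precomposed with $x\mapsto(1,x)$, and injective since $[1,x]=[1,y]$ forces $m_1(x)=y$. It remains to see that $\iota$ is open onto its image. Because $G*\X$ is $q$-saturated and $q$ is open, the corestriction $q|_{G*\X}\colon G*\X\to\iota(\X)$ is again open (for $B$ open in $G\times\X$ one has $q(B\cap(G*\X))=q(B)\cap\iota(\X)$ by saturation), hence it is a quotient map onto $\iota(\X)$; and for open $U\subseteq\X$ one has $(q|_{G*\X})\m(\iota(U))=\{(g,x)\in G*\X:m_g(x)\in U\}=m\m(U)$, which is open in $G*\X$ by continuity of $m$, so $\iota(U)$ is open in $\iota(\X)$.

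I expect the main obstacles to be the construction of the maps $\Phi_t$ in (1) --- they are exactly what turns the saturation of an open set into a union of preimages of open sets --- and, in (3), the fact that $G*\X$ need not be open in $G\times\X$, which is why one cannot invoke ``the restriction of a quotient map to an open saturated set is a quotient map'' and must instead use the openness of $q$ directly to conclude that $q|_{G*\X}$ is a quotient map.
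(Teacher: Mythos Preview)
The paper does not supply its own proof of this theorem: it is stated with citations to \cite[Theorem 1.1]{AB} and \cite[Theorem 3.9]{KL} and used as a black box. Your proposal is a correct, self-contained argument that recovers those facts, and the line of reasoning---showing $q\m(q(U))$ is a union of preimages under the continuous maps $\Phi_t$, then exploiting openness of $q$ to handle items (2)--(4)---is essentially the standard one in the cited sources. One small remark: in (3) your identification $(q|_{G*\X})\m(\iota(U))=m\m(U)$ implicitly uses the injectivity of $\iota$ (so that $[g,x]=[1,u]$ with $u\in U$ forces $m_g(x)\in U$, not just $m_g(x)\in\iota\m(\iota(U))$), but you have already checked that, so the argument goes through.
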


In general, the space $\X_G$ is not metrizable. Next result gives sufficient conditions to get the metrizability of $\X_G$. 

\begin{teo}\label{xgpol} \cite[Theorem 4.7]{PU} Let $m$ be a continuous partial action of a separable metrizable group $G$ on a separable metrizable space  $\mathbb{X}$, then $\mathbb{X}_G$ is metrizable under any of the following conditions:
	$G*\mathbb{X}$ is clopen or $\mathbb{X}$ is locally compact and $R$ is closed in $(G\times \X)^2$. If in addition,  $G$ and $\mathbb{X}$ are Polish, then  $\mathbb{X}_G$ is Polish.
	
\end{teo}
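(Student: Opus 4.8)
The plan is to deduce metrizability from the Urysohn metrization theorem (a regular Hausdorff second countable space is metrizable), and then to obtain the ``Polish'' conclusion by a standard open–mapping argument rather than by redoing the construction. First, since by Theorem \ref{Abadie} the map $q\colon G\times\X\to\X_G$ is a continuous open surjection and $G\times\X$ is second countable, $\X_G$ is second countable: the $q$-images of a countable basis of $G\times\X$ form a basis of $\X_G$. Also, because $q$ is open, $\X_G$ is Hausdorff if and only if $R$ is closed in $(G\times\X)^2$ — for the nontrivial direction, given $[g,x]\neq[h,y]$ choose a basic open box about $((g,x),(h,y))$ missing $R$ and push its two sides down by the open map $q$. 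Hence in the locally compact case the Hausdorff property is automatic, while in the clopen case it will follow from the argument below; thus in both cases it remains only to prove that $\X_G$ is regular.

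Suppose first that $G*\X$ is clopen. Unravelling the definition of $R$ one checks that $q^{-1}(\iota(\X))=G*\X$, so $\iota(\X)$ is clopen in $\X_G$ (as $q$ is a quotient map). Since every class satisfies $[h,x]=\mu(h,\iota(x))$ and each map $y\mapsto\mu(h,y)$ is a homeomorphism of $\X_G$, the sets $\mu(h,\iota(\X))$, $h\in G$, form a clopen cover of $\X_G$ by homeomorphic copies of $\X$; by second countability it has a countable subcover, which I would disjointify into a sequence $(D_n)$ of pairwise disjoint clopen sets, each an open subspace of a copy of $\X$, hence metrizable (and Polish if $\X$ is). Then $\X_G=\coprod_n D_n$ as a topological sum, and a countable topological sum of metrizable (resp.\ Polish) spaces is metrizable (resp.\ Polish); in particular $\X_G$ is regular and the proof is complete in this case.

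Suppose now that $\X$ is locally compact and $R$ is closed. Given $[g_0,x_0]$ and an open $W\ni[g_0,x_0]$, pick a basic box $O\times U\subseteq q^{-1}(W)$ about $(g_0,x_0)$; using that $\X_{g_0^{-1}}$ is open in the locally compact space $\X$, choose an open $V$ and a compact $K$ with $x_0\in V\subseteq K\subseteq U\cap\X_{g_0^{-1}}$, and choose an open $O'$ with $g_0\in O'\subseteq\overline{O'}\subseteq O$. Then $q(O'\times V)$ is an open neighbourhood of $[g_0,x_0]$ and, granting that the $R$-saturation of $\overline{O'}\times K$ is closed in $G\times\X$ (so that $q(\overline{O'}\times K)$ is closed in $\X_G$), one gets $\overline{q(O'\times V)}\subseteq q(\overline{O'}\times K)\subseteq q(O\times U)\subseteq W$, which proves regularity. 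The main obstacle is exactly the closedness of that saturation: in the natural net argument one controls the $\X$-coordinates of the representatives by compactness of $K$, but one must also confine the \emph{group} coordinates that occur, and here $G$ is only assumed separable metrizable; if $G$ were also locally compact one could take $\overline{O'}$ compact and conclude at once, so the point is to extract the required confinement of the $G$-coordinates from the closedness of $R$ together with the continuity of $m$, using crucially that $m$ carries the confined data into the compact set $K$ — this is the single place where local compactness of $\X$ is genuinely used, and it is what makes the statement nontrivial.

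Finally, for the ``Polish'' strengthening, once $\X_G$ is known to be metrizable — hence second countable and metrizable, by the first step — and $G,\X$ are Polish, $\X_G$ is a continuous open image of the completely metrizable space $G\times\X$; by Hausdorff's theorem on open continuous images of completely metrizable spaces, $\X_G$ is completely metrizable, and being separable it is Polish.
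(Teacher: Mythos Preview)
The paper does not prove this theorem; it is quoted from \cite{PU} (their Theorem~4.7) and used as background in the Preliminaries, so there is no ``paper's own proof'' to compare with. I will therefore assess your proposal on its own merits.

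Your treatment of the case $G*\X$ clopen is correct and clean: the identity $q^{-1}(\iota(\X))=G*\X$ is right, the translates $\mu(h,\iota(\X))$ give a clopen cover by copies of $\X$, and the disjointification shows $\X_G$ is a countable topological sum of metrizable (resp.\ Polish) pieces. The Polish upgrade via Sierpi\'nski/Hausdorff's open--image theorem is also fine.

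In the locally compact case, however, you have a genuine gap that you have correctly located but not closed. The crux is the claim that the $R$-saturation of $\overline{O'}\times K$ is closed. Writing this saturation as $p_1\bigl(R\cap((G\times\X)\times(\overline{O'}\times K))\bigr)$, you are projecting a closed set along the second factor; this is a closed map exactly when the fibers are compact, and the fiber over $(g,x)$ is (homeomorphic to) the set of $h\in\overline{O'}$ with $x\in\X_{g^{-1}h}$ and $m_{h^{-1}g}(x)\in K$. Compactness of $K$ confines the $\X$-coordinate, but it does \emph{not} confine $h$: nothing in ``$m_{h^{-1}g}(x)\in K$'' forces $h$ to lie in a compact subset of $G$ when $G$ is merely separable metrizable. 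Your heuristic that ``$m$ carries the confined data into the compact set $K$'' goes the wrong way --- it tells you the image lands in $K$, not that the preimage in the $G$-direction is compact. If $G$ were locally compact you could take $\overline{O'}$ compact and the argument would go through, as you note; without that, this route does not establish regularity. A working proof (as in \cite{PU}) has to supply an additional ingredient to control the group coordinate --- or to bypass the saturation argument altogether --- and that is missing here.
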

We finish this section with a result that will be needed later. 

\begin{lema} 
\label{estabilizador} Let $m$ be a  partial action of a metrizable group $G$ over a space  $\X$. 
Then the stabilizer $G_x$ of $x$ respect to  $m$ is equal to the stabilizer of $\iota(x)$ respect to $\mu,$ and therefore, if $\X_G$ is $T_1$ then $G_x$ is closed.
\end{lema}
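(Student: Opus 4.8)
The plan is to obtain the equality of stabilizers by unwinding the definitions, and then to deduce closedness by observing that the stabilizer of $\iota(x)$ is the preimage of a point under a continuous orbit map.

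For the first assertion, recall from \eqref{action} that $\mu(g,\iota(x))=\mu(g,[1,x])=[g,x]$, so the stabilizer of $\iota(x)$ with respect to $\mu$ is $\{g\in G\mid [g,x]=[1,x]\}$. Now $[g,x]=[1,x]$ means $(g,x)R(1,x)$, which by the definition of $R$ amounts to $x\in\X_{g\m}$ together with $m_g(x)=x$. But $x\in\X_{g\m}$ is precisely the statement $\ex g\cdot x$, i.e.\ $g\in G^x$; hence $[g,x]=[1,x]$ holds if and only if $g\in G^x$ and $g\cdot x=x$, that is, $g\in G_x$. This shows the two stabilizers coincide as subsets of $G$ (this part uses no topology).

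For the second assertion, consider the map $\varphi\colon G\to\X_G$ given by $\varphi(g)=[g,x]$. It is the composition of $g\mapsto (g,x)$, which is continuous from $G$ into $G\times\X$, with the quotient map $q$ of \eqref{que}, which is continuous by the very definition of the quotient topology on $\X_G$ (it is in fact open by Theorem \ref{Abadie}(1)). Hence $\varphi$ is continuous, and by the first part $G_x=\varphi\m(\{\iota(x)\})$. Therefore, if $\X_G$ is $T_1$ then $\{\iota(x)\}$ is closed in $\X_G$, so $G_x$ is closed in $G$.

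I do not expect any genuine difficulty here; the only delicate point is to keep the indices in the definition of $R$ straight, so that the condition $[g,x]=[1,x]$ is correctly translated into ``$g\in G^x$ and $g\cdot x=x$'' and not into something involving $g\m$ on the wrong side.
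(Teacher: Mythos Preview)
Your proof is correct and follows essentially the same route as the paper's. For the equality of stabilizers you unwind the relation $R$ directly, while the paper argues the inclusion $G_x\subseteq G_{\iota(x)}$ via the identity $[1,g\cdot x]=[g,x]$; these amount to the same computation. For the closedness, your map $\varphi(g)=[g,x]$ is exactly the paper's orbit map $\mu_{\iota(x)}$, and both proofs take the preimage of the closed singleton $\{\iota(x)\}$; your justification of continuity through the quotient map $q$ is slightly more self-contained than the paper's appeal to the continuity of $\mu$, but the argument is the same.
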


\proof Let $g\in G_x,$ then $\exists g\cdot x$ and  $g\cdot x=x,$ which implies $[1,g\cdot x]=[1,x],$ but  $[1,g\cdot x]=[g,x].$ Thus by \eqref{action} we have $\mu_g([1,x])=[1,x]$ and $g\in G_{\iota(x)}.$ Conversely, if $[g,x]=[1,x]$ then $\exists g\cdot x$ and  $g\cdot x=x,$ that is $g\in G_x.$  Finally, if $\X_G$ is $T_1,$ then $\{\iota(x)\}$ is closed in $\X_G$ and $G_{\iota(x)}=\mu\m_{\iota(x)}(\{\iota(x)\})$ is closed in $G.$
\endproof

\section{The open mapping theorem}

The purpose of this section is to prove that for a transitive and continuous partial  action $m,$ the map $m^x \colon G^x\ni g\mapsto g\cdot x\in\X $ is open, provided that $G$ is Polish and $G^x$ is open for all $x$. 

\noindent As in most  proofs of this type of results,  we will use Baire category methods. The proof will follow the ideas presented in \cite{VM}. We start by recalling  some notions we will need.    A subset $A$ of a topological space $\X$ is {\em nowhere meager} in $\X$, if every non empty relative open subset of $A$ is not meager in $\X$. 
A subset of $\X$  is {\em analytic} if it is a continuous image of a Polish space (notice that we are not assuming for this definition that $\X$ is Polish, not even metrizable).
\smallskip

The second part of the following  proposition  generalizes   \cite[Proposition 2.2]{VM}. Since we need this result for Hausdorff spaces, we include a sketch of its proof for the sake of completeness.

\begin{prop}
\label{Propinterfat} 
Let $\X$ be a Hausdorff  space and $A\subseteq \X$ a set with the Baire property.  If  $A$  is dense and  nowhere meager, then $A$ is comeager. In particular, if $\X$ is not meager and   $A$ and $B$ are analytic, dense and nowhere meager subsets of $\X$,  then $A\cap B\neq \emptyset$. 	
\end{prop}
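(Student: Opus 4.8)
The plan is to prove the two assertions separately. For the first, the only structural fact needed is that in an arbitrary topological space the subsets having the Baire property form a $\sigma$-algebra containing the open sets; the one point worth noting is closure under complementation, which holds because $\overline{U}\setminus U$ is nowhere dense for every open $U$. Granting this, $\X\setminus A$ has the Baire property, so write $\X\setminus A=W\triangle N$ with $W$ open and $N$ meager. Suppose $A$ is not comeager; then $\X\setminus A$ is non-meager, hence $W\neq\emptyset$. Since $W\setminus N\subseteq\X\setminus A$, we get $A\cap W\subseteq N$. But $A$ is dense and $W$ is a nonempty open set, so $A\cap W$ is a nonempty relatively open subset of $A$, which, $A$ being nowhere meager, is non-meager in $\X$; this contradicts $A\cap W\subseteq N$. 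Hence $A$ is comeager.

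For the second assertion the key point is that \emph{every analytic subset of a Hausdorff space has the Baire property}, which I would sketch as follows. Let $f\colon P\to\X$ be continuous with $P$ Polish and $f(P)=A$, and fix a complete metric inducing the topology of $P$. Using separability, build a Souslin scheme $\{P_s\}_{s\in\N^{<\N}}$ of nonempty closed subsets of $P$ with $P_\emptyset=P$, $P_{s^\frown n}\subseteq P_s$, $P_s=\bigcup_n P_{s^\frown n}$, and $\mathrm{diam}(P_s)\to 0$ along each infinite branch. Put $F_s=\overline{f(P_s)}$, a closed — hence Baire-property — subset of $\X$. Then $A=\bigcup_{x\in\N^\N}\bigcap_n F_{x|n}$: the inclusion $\subseteq$ is immediate, and for $\supseteq$, given $y\in\bigcap_n F_{x|n}$, completeness yields a single point $p$ with $\bigcap_n P_{x|n}=\{p\}$, and continuity of $f$ together with the Hausdorff property of $\X$ — which gives $\bigcap\{\overline{W}:W\ni f(p)\text{ open}\}=\{f(p)\}$ — forces $y=f(p)\in A$. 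Thus $A$ arises from closed sets via the Souslin operation, and since the class of sets with the Baire property is closed under the Souslin operation in any topological space, $A$ (and likewise $B$) has the Baire property.

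The second assertion then follows at once: by the first part $A$ and $B$ are both comeager, so $\X\setminus(A\cap B)=(\X\setminus A)\cup(\X\setminus B)$ is meager, and since $\X$ is non-meager this forces $A\cap B\neq\emptyset$ (in fact $A\cap B$ is comeager).

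The main obstacle is the middle step: the textbook "analytic $\Rightarrow$ Baire property" statements are usually phrased for metrizable, often Polish, spaces, whereas here $\X$ carries no metric. The delicate points are (i) producing the Souslin scheme of closed sets, which must be run inside the Polish \emph{domain} $P$ and then transported to $\X$ by image-and-closure, since $\X$ itself offers no metric to control diameters; and (ii) identifying the Souslin kernel with $A$, which is exactly where the Hausdorff hypothesis enters, collapsing the intersection along each branch to a single image point. The remaining ingredients — the $\sigma$-algebra bookkeeping, the closure of the Baire property under the Souslin operation, and the passage from "both comeager" to "intersection non-empty" — are classical and purely formal.
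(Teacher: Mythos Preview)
Your proof is correct and follows essentially the same approach as the paper's. The paper merely invokes the two classical facts --- closure of the Baire property class under the Souslin operation, and the representability of analytic subsets of a Hausdorff space as a Souslin operation on closed sets --- while you sketch the latter explicitly (correctly running the Souslin scheme in the Polish domain and using Hausdorffness of $\X$ only to collapse $\bigcap\{\overline{W}:W\ni f(p)\}$ to $\{f(p)\}$); the remaining structure of the argument is identical.
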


\begin{proof}
The first claim is straightforward. For the second claim,   it suffices to show that every analytic set on a Hausdorff space  has the Baire property and then use the obvious fact  that the intersection of two comeager sets is comeager and hence non empty as $\X$ is not meager.  The proof that an analytic subset of a Hausdorff space has the Baire property  uses two facts: (i)  the  classical result saying that the collection of Baire measurable subsets of a topological space  is closed under the Suslin operation (see \cite[\S 11 VII]{KW1}) and (ii)  any analytic set is the result of applying the Suslin operation to a collection of closed sets (here we need that $\X$ is Hausdorff)	.
\end{proof}

It is a standard fact that in any first countable group there is a basis $\left\{U_n\right\}_n$ of the identity $1\in G$   with the following properties:

\begin{itemize}
	\item[(a)] $U_n$ is symmetric ($U_n=U_n^{-1}$), for each $n\in \N$ and $U_1=G;$
	\item[(b)] $U_{n+1}\subseteq U_{n+1}^2\subseteq U_n.$
\end{itemize}
We fix such a basis for the rest of the paper. 
\smallskip

Given $U\subseteq G$, $A\subseteq \X$, $g\in G$ and $x\in \X$, we denote 
$
U^x=U\cap G^x=\{h\in U : \exists h\cdot x\},
$
and 
$
g\cdot A=m_g(A\cap \X_{g^{-1}}).
$

\begin{lema}\label{LemPropAx}Let  $U, F\subseteq G$ and $x\in \X.$ Then
$\left(\bigcup_{g\in F} \left(gU^x\right)\right)^x\cdot x= \bigcup_{g\in F} (  g\cdot U^x)\cdot x.$
\end{lema}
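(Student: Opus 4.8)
The plan is to prove the identity by a pointwise set computation, after unwinding the two pieces of notation. Recall that $U^x\cdot x=\{h\cdot x: h\in U\cap G^x\}\subseteq\X$, that for $A\subseteq\X$ we have $g\cdot A=m_g(A\cap\X_{g\m})=\{g\cdot z: z\in A,\ \ex g\cdot z\}$, and that $\bigl(\bigcup_{g\in F}gU^x\bigr)^x=\bigl\{h\in G^x: h=gu\ \text{for some}\ g\in F,\ u\in U^x\bigr\}$; so, reading the right-hand side as $\bigcup_{g\in F}g\cdot(U^x\cdot x)$, the content of the lemma is
\[
\Bigl(\bigcup_{g\in F}gU^x\Bigr)^x\cdot x=\bigcup_{g\in F}g\cdot(U^x\cdot x).
\]
First I would isolate the elementary commutation fact on which everything rests: for $u\in U^x$ (so that $\ex u\cdot x$) and any $g\in G$,
\[
\ex (gu)\cdot x\iff \ex g\cdot(u\cdot x),\qquad\text{and in that case}\qquad (gu)\cdot x=g\cdot(u\cdot x).
\]

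The implication ``$\Leftarrow$'', together with the asserted equality, is exactly (PA2). For ``$\Rightarrow$'', since $\ex u\cdot x$, axiom (PA1) gives $\ex u\m\cdot(u\cdot x)$ with $u\m\cdot(u\cdot x)=x$; hence $\ex (gu)\cdot\bigl(u\m\cdot(u\cdot x)\bigr)$, and applying (PA2) to the group elements $gu$, $u\m$ and the point $u\cdot x$ yields $\ex\bigl((gu)u\m\bigr)\cdot(u\cdot x)=\ex g\cdot(u\cdot x)$ together with $g\cdot(u\cdot x)=(gu)\cdot\bigl(u\m\cdot(u\cdot x)\bigr)=(gu)\cdot x$. (Alternatively, one may read this off the domains via Proposition~\ref{fam}(ii): since $x\in\X_{u\m}$ and $m_u$ is a bijection $\X_{u\m}\to\X_u$, one has $u\cdot x\in\X_{g\m}$ iff $m_u(x)\in\X_u\cap\X_{g\m}=m_u(\X_{u\m}\cap\X_{u\m g\m})$ iff $x\in\X_{u\m g\m}=\X_{(gu)\m}$ iff $\ex (gu)\cdot x$.) Granting the commutation fact, the two inclusions are routine. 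If $y$ lies in the left-hand set, then $y=h\cdot x$ with $h=gu$, $g\in F$, $u\in U^x$ and $\ex h\cdot x$; the commutation fact gives $\ex g\cdot(u\cdot x)$ and $y=g\cdot(u\cdot x)\in g\cdot(U^x\cdot x)$, since $u\cdot x\in U^x\cdot x$. Conversely, if $y\in g\cdot(U^x\cdot x)$ for some $g\in F$, write $y=g\cdot(u\cdot x)$ with $u\in U^x$ and $\ex g\cdot(u\cdot x)$; then $\ex(gu)\cdot x$ and $y=(gu)\cdot x$, and since $gu\in gU^x\subseteq\bigcup_{g'\in F}g'U^x$ and $gu\in G^x$, we conclude $y\in\bigl(\bigcup_{g'\in F}g'U^x\bigr)^x\cdot x$.

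The only step that is not a pure unraveling of definitions is the forward direction $\ex(gu)\cdot x\Rightarrow \ex g\cdot(u\cdot x)$ in the commutation fact: the axioms grant the passage $\ex g\cdot(u\cdot x)\Rightarrow\ex(gu)\cdot x$ directly through (PA2), but not its converse. This is exactly where the hypothesis $u\in U^x$, i.e. $\ex u\cdot x$, is used — it lets one insert $x=u\m\cdot(u\cdot x)$ and then invoke (PA2) with the product $(gu)u\m=g$ (equivalently, it puts $x$ in $\X_{u\m}$ so that the domain computation with Proposition~\ref{fam}(ii) applies). Everything else — rewriting $U^x$, $U^x\cdot x$, $g\cdot A$, and commuting the union over $g\in F$ past these operations — is bookkeeping.
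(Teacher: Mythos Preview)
Your proof is correct and follows essentially the same route as the paper. The paper only writes the forward inclusion: from $y=(gu)\cdot x$ with $u\in U^x$, it notes $x\in \X_{(gu)\m}\cap\X_{u\m}$ and invokes Proposition~\ref{fam}(ii) to get $u\cdot x\in\X_{g\m}$, then (PA2) to conclude $y=g\cdot(u\cdot x)$; the converse it simply calls ``easy''. Your primary argument for the forward step uses (PA1)+(PA2) axiomatically (inserting $x=u\m\cdot(u\cdot x)$), and you record the Proposition~\ref{fam}(ii) route as an alternative---so your write-up in fact contains the paper's argument and a bit more, including the converse spelled out.
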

\proof Indeed, for $y\in \left(\bigcup_{g\in F} \left(gU^x\right)\right)^x\cdot x,$ there is $g\in F$ and $u\in U^x$ such that $y=(gu)\cdot x,$ then $x\in \X_{(gu)\m}\cap \X_{u\m},$ and by (ii) of Proposition \ref{fam} we get that $u\cdot x\in \X_{g\m}\cap \X_u,$ which implies  $y=g\cdot (u\cdot x),$ thanks to (PA2) , from this we get that $y\in\bigcup_{g\in F} (  g\cdot U^x)\cdot x. $ The converse is easy.
\endproof

\begin{lema}\label{Lem1TAA} Let $m$ be a continuous partial action of $G$ on $\X.$
Let $n\in \N$ and $x\in \X$ such that $G^x$ is open. Then for every open subset  $V$ of $\X$ and $z\in V\cap U_n^x\cdot x$, there is  $m\in \N$ such that  $U_m^z\cdot z \subseteq V\cap U_n^x \cdot x.$

\end{lema}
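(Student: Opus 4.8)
The plan is to exploit the continuity of $m$ together with the openness of the basic sets $U_n$. The main point is that if $z\in U_n^x\cdot x$, say $z=u\cdot x$ with $u\in U_n^x$, then for $h$ sufficiently close to $1$ we have $h\cdot z=(hu)\cdot x$ by (PA2), and one can force $hu$ to remain inside $U_n$ by taking $h$ in a small enough member of the fixed basis.

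First I would fix $u\in U_n\cap G^x$ with $u\cdot x=z$. Since $U_n$ is open and the right translation $h\mapsto hu$ is a homeomorphism of $G$, the set $U_nu\m$ is an open neighbourhood of $1$, so there is $k_1\in\N$ with $U_{k_1}u\subseteq U_n$. Next, since $1\in G^z$, $1\cdot z=z\in V$ and $(1,z)\in G*\X$, continuity of $m\colon G*\X\to\X$ at $(1,z)$ yields open sets $O\ni 1$ in $G$ and $N\ni z$ in $\X$ with $m\big((O\times N)\cap (G*\X)\big)\subseteq V$; choose $k_2\in\N$ with $U_{k_2}\subseteq O$. Put $k=\max(k_1,k_2)$, so that $U_k\subseteq U_{k_1}\cap U_{k_2}$ because the basis $\{U_j\}_j$ is decreasing.

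Now for $h\in U_k^z$ we have $\exists h\cdot z$, hence $\exists h\cdot(u\cdot x)$, so (PA2) gives $\exists (hu)\cdot x$ and $h\cdot z=(hu)\cdot x$. On one hand $hu\in U_{k_1}u\subseteq U_n$ and $hu\in G^x$, hence $hu\in U_n^x$ and $h\cdot z\in U_n^x\cdot x$. On the other hand $(h,z)\in (O\times N)\cap(G*\X)$, so $h\cdot z\in V$. Thus $U_k^z\cdot z\subseteq V\cap U_n^x\cdot x$, which is the assertion with $m=k$. I do not anticipate a genuine obstacle: the only delicate point is the justification of the use of (PA2), which is immediate here because $h\cdot z$ is defined and $z$ is literally of the form $u\cdot x$; and one must simply note that the two constraints on the index are met simultaneously by taking a maximum. (The hypothesis that $G^x$ be open seems not to be needed for this particular lemma, being rather carried along for the subsequent results.)
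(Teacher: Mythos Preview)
Your proof is correct and follows essentially the same strategy as the paper's: pick a witness $u\in U_n^x$ with $u\cdot x=z$, and use right translation together with (PA2) to show that for $h$ close to $1$ the element $hu$ stays in $U_n^x$ and $h\cdot z=(hu)\cdot x$ lands in $V$. The only difference is in how the inclusion into $V$ is secured: the paper pulls $V$ back via the evaluation map $m^x\colon G^x\to\X$, setting $E=(m^x)^{-1}(V)$, and then chooses a single $m$ with $U_m\subseteq Eh^{-1}\cap U_n^x h^{-1}$; you instead invoke continuity of $m$ at the point $(1,z)\in G*\X$ to obtain a product neighbourhood $O\times N$ mapping into $V$. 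Your route has the small advantage that it makes your parenthetical remark precise: the paper's argument, as written, uses openness of $G^x$ to ensure $E$ and $U_n^x$ are open in $G$, whereas your argument never needs this, since $hu\in G^x$ is forced directly by (PA2) from $\exists\, h\cdot(u\cdot x)$.
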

\begin{proof} 
First of all notice that  $U_n^x=U_n\cap G^x$ is an open subset of $G$ containing the identity. Since  $z\in V\cap U_n^x\cdot x$, then $z=m^x(h)$  for some $h\in U_n^x$. Let $E=(m^x)\m(V)$, then $E$ is an  open subset of 
$G$. But  $1\in Eh^{-1}\cap U_n^xh^{-1},$ thus there is   $m\in \N$ such that $U_m\subseteq Eh^{-1}\cap U_n^xh^{-1}.$ Now we check that  $U_m^z\cdot z \subseteq V\cap U_n^x \cdot x.$ Let  $p\in U_m^z\cdot z$ and $g\in U_m^z$ such that $p=g\cdot z.$ Since $g\in U_m^z\subseteq  Eh^{-1}\cap U_n^xh^{-1},$ then $gh\in E\cap U_n^x$ and
 $$
 p=g \cdot z = g \cdot (h\cdot x)\overset{{\rm (PA2)}}=(gh)\cdot x = m^x(gh)\in m^x(E\cap U_n^x).
 $$ 
 Therefore $p\in V\cap U_n^x\cdot x,$ as desired.
\end{proof}

Next lemma is the crucial step for the proof of our main result. 

\begin{lema}\label{Prop3TAA} Let $G$ be a  Polish  group and  $m$  a   continuous transitive partial action of $G$ on a non meager Hausdorff space $\X$ such that   $G^x$ is open for all $x\in \X$. Then the following holds for all $x\in \X$.

\begin{itemize}
\item[(i)] $U_n^x\cdot x$ is not meager  for  every  $n\in\N.$

\item[(ii)] $U_n^x\cdot x$ is nowhere meager  for  every  $n\in\N.$ In particular, $\X$ is Baire. 

\item[(iii)] $\text{int}\left(\overline{U_n^x\cdot x}\right)$ is dense in $\overline{U_n^x\cdot x}$ and $x\in\text{int}\left(\overline{U_n^x\cdot x}\right)$  for  every  $n\in\N$. 
\item[(iv)] $\text{int}\left(\overline{U_{n+1}^x\cdot x}\right)\subseteq U_n^x\cdot x.$ 
\item[(v)] $x\in \text{int}\left(U_n^x\cdot x\right)$  for  every  $n\in\N.$
\item[(vi)] $U_n^x\cdot x$ is an open neighborhood of $x$  for  every  $n\in\N$.
\end{itemize}
\end{lema}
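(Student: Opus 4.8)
The plan is to prove items (i)--(vi) in order, each building on the previous ones, using Baire category and the structure of the basis $\{U_n\}_n$. For (i), by transitivity $\X=G^x\cdot x=\bigcup_{g\in G}(g\cdot U_n^x)\cdot x$ since $G$ is covered by countably many left translates $g_k U_n^x$ (as $G$ is separable and $U_n^x$ is open in $G^x$, hence has nonempty interior in $G$... in fact $U_n$ is a neighborhood of $1$). By Lemma \ref{LemPropAx}, $\X = \bigcup_k (g_k\cdot U_n^x)\cdot x$, and since $\X$ is not meager, some $(g_k\cdot U_n^x)\cdot x$ is not meager; but $g_k\cdot(\cdot)$ is a homeomorphism $m_{g_k}$ restricted to an open set, so it preserves non-meagerness, hence $U_n^x\cdot x$ is not meager. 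For (ii), given a nonempty relatively open piece $W$ of $U_n^x\cdot x$, pick $z\in W$; by Lemma \ref{Lem1TAA} there is $m$ with $U_m^z\cdot z\subseteq W$, and by (i) applied at the point $z$ (note $G^z$ is open by hypothesis), $U_m^z\cdot z$ is not meager, so $W$ is not meager; thus $U_n^x\cdot x$ is nowhere meager, and since it is contained in $\X$ and non-meager, $\X$ is non-meager in itself on a dense-ish set — more precisely, non-meagerness of the whole space plus the fact that non-meager subsets witness the Baire property locally gives that $\X$ is Baire (every nonempty open set is non-meager, using transitivity to translate).

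For (iii), $U_n^x\cdot x$ is the continuous image of the Polish space $U_n^x$ (open in the Polish group $G$) under $m^x$, hence analytic, hence has the Baire property in the Hausdorff space $\X$ by Proposition \ref{Propinterfat}'s argument. A non-meager set with the Baire property differs from an open set by a meager set, so it is non-meager in every neighborhood of each of its "density points," giving that $\mathrm{int}(\overline{U_n^x\cdot x})$ is dense in $\overline{U_n^x\cdot x}$; that $x\in\mathrm{int}(\overline{U_n^x\cdot x})$ follows because $x=1\cdot x\in U_{n+1}^x\cdot x$ and, using (ii), $U_{n+1}^x\cdot x$ is nowhere meager and one shows its closure contains a neighborhood of $x$ — here is where the symmetry and the condition $U_{n+1}^2\subseteq U_n$ enter: any point near $x$ in $\overline{U_{n+1}^x\cdot x}$ can be moved back. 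For (iv), take $y\in\mathrm{int}(\overline{U_{n+1}^x\cdot x})$. Then $U_{n+1}^y\cdot y$ is nowhere meager (by (ii) at $y$) and, using continuity to find that $U_{n+1}^y\cdot y$ meets every neighborhood of $y$, in particular it meets $U_{n+1}^x\cdot x$ since $y$ is in the interior of its closure. So there is $h\in U_{n+1}^y$ and $g\in U_{n+1}^x$ with $h\cdot y = g\cdot x$; but also $y$ itself is a limit of points $g'\cdot x$ with $g'\in U_{n+1}^x$, and combining via (PA1)--(PA2) and $U_{n+1}^2\subseteq U_n$ one gets $y=g''\cdot x$ with $g''\in U_n^x$. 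The cleanest route is: $y\in\overline{U_{n+1}^x\cdot x}$ and $\mathrm{int}(\overline{U_{n+1}^x\cdot x})$ open, so the nowhere-meager set $U_{n+1}^{y}\cdot y$ (which by (iii) at $y$ has $y$ in the interior of its closure) must intersect the analytic nowhere-meager set $U_{n+1}^x\cdot x$ inside a neighborhood of $y$ by Proposition \ref{Propinterfat}; pick a common value $w = h\cdot y=g\cdot x$ with $h\in U_{n+1}^y$, $g\in U_{n+1}^x$, then $y = h^{-1}\cdot w = h^{-1}\cdot(g\cdot x)=(h^{-1}g)\cdot x$ with $h^{-1}g\in U_{n+1}^{-1}U_{n+1}=U_{n+1}^2\subseteq U_n$, and $h^{-1}g\in G^x$ by Lemma \ref{gxg}(i) since $g^{-1}(h^{-1}g) = (h g^{-1} \cdot)$... one checks $h^{-1}g\in G^x$ directly from $\exists (h^{-1}g)\cdot x = y$. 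Hence $y\in U_n^x\cdot x$.

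Finally, (v) is immediate by combining (iii) and (iv): $x\in\mathrm{int}(\overline{U_{n+1}^x\cdot x})\subseteq U_n^x\cdot x$, and since $\mathrm{int}(\overline{U_{n+1}^x\cdot x})$ is open, $x\in\mathrm{int}(U_n^x\cdot x)$. For (vi), we must upgrade "$x$ is interior" to "every point of $U_n^x\cdot x$ is interior": given $y=g\cdot x\in U_n^x\cdot x$ with $g\in U_n^x$, choose $k$ with $U_kU_n\subseteq U_n$... this is false in general, so instead argue locally — by (v) applied at the point $y$ (valid since $G^y$ is open), $y\in\mathrm{int}(U_m^y\cdot y)$ for all $m$, and by Lemma \ref{Lem1TAA} (with the roles reversed, using $y = g\cdot x$, $x = g^{-1}\cdot y$) for suitable $m$ we have $U_m^y\cdot y\subseteq U_n^x\cdot x$ — this inclusion is essentially Lemma \ref{Lem1TAA} read with $V=\X$ and a translation by $g$ — hence $y\in\mathrm{int}(U_n^x\cdot x)$. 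Therefore $U_n^x\cdot x$ is open, and it contains $x$, proving (vi).

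The main obstacle I anticipate is item (iv): controlling the group elements so that the witness lands in $U_n^x$ rather than merely in $U_n$, which forces careful bookkeeping with the domains $\X_{g^{-1}}$, the axioms (PA1)--(PA2), and Lemma \ref{gxg}(i) to guarantee membership in $G^x$ at each step; the rest is a fairly standard Effros-style bootstrap, but this is the step where the partiality of the action genuinely complicates the classical argument.
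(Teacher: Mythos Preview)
Your proposal is correct and follows essentially the same bootstrap as the paper: (i) by Lindel\"of covering and Lemma~\ref{LemPropAx}, (ii) via Lemma~\ref{Lem1TAA} plus (i), (iii)--(iv) by Baire category and Proposition~\ref{Propinterfat}, then (v)--(vi) by combining the earlier items with Lemma~\ref{Lem1TAA}. The paper is merely more explicit in the two spots where your sketch is vague: for the second half of (iii) it picks any nonempty open $V\subseteq\overline{U_{n+1}^x\cdot x}$, chooses $h\in U_{n+1}^x$ with $h\cdot x\in V$, and shows $x\in h^{-1}\cdot(V\cap\X_h)\subseteq\overline{U_n^x\cdot x}$; and for (iv) it applies Proposition~\ref{Propinterfat} inside the open set $E=\text{int}\big(\overline{U_{n+1}^x\cdot x}\big)\cap\text{int}\big(\overline{U_{n+1}^z\cdot z}\big)$, where both $U_{n+1}^x\cdot x$ and $U_{n+1}^z\cdot z$ are dense and nowhere meager, which is exactly the ``cleanest route'' you outline.
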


\begin{proof} 

(i) Suppose that  $U_n^x\cdot x$ is meager. Notice that  $\left\{gU_n^x\right\}_{g\in G}$ is a open cover of $G$. Also,  since $G$ is  metrizable and separable, it is  Lindel\"{o}f, then there is a countable  set  $F$  such that  $G=\displaystyle\bigcup_{g\in F} gU_n^x.$ By Lemma \ref{LemPropAx}, 
	$$
	\X=G^x\cdot x=\bigcup_{g\in F} g\cdot(U_n^x\cdot x).
	$$
For each $g\in F,$ the  set $(U_n^x\cdot x)\cap \X_{g\m}$ is meager in $\X_{g\m} $ (as  $\X_{g\m}$ is open), thus  $g\cdot (U_n^x\cdot x)=m_g(U_n^x\cdot x\cap \X_{g\m})$ is meager in $\X_g$ and hence in $\X$.  Since $F$ is countable,  $\X$ is  meager, which is a contradiction.

(ii) We shall prove  that every non empty relative open subset of  $U_n^x\cdot x$ is not meager. Let  $V$ be an open subset of $\X$ and  $z\in V\cap U_n^x\cdot x,$ by Lemma \ref{Lem1TAA}, there is $m\in\N$ such that  $U_m^z\cdot z \subseteq V\cap U_n^x\cdot x$. By  (i), $U_m^z\cdot z$ is not meager. Therefore $V\cap U_n^x\cdot x$ is not meager.  This finishes the proof that  $U_n^x\cdot x$ is nowhere meager.  Since $U_1=G$ and $\X=G^x\cdot x$, then $\X$  is nowhere meager, i.e. it is Baire. 

(iii) Let $V$ be a open subset of $\X$ such that $V\cap \overline{U_n^x\cdot x}\neq\emptyset.$ By (ii), the set $V\cap U_n^x\cdot x$ is not meager, in particular,   $\text{int}\left(\overline{V\cap U_n^x\cdot x}\right)\neq\emptyset.$ From this it follows that  $\text{int}\left(\overline{U_n^x\cdot x}\right)$ is dense in $\overline{U_n^x\cdot x}.$ 
	
To see the second claim, let  $V$ be a non empty open subset of $\X$ such that $V\subseteq \overline{U_{n+1}^x\cdot x}$. Then  there is $h\in U_{n+1}^x$ such that  $h\cdot x \in V$.   Since $U_{n+1}\subseteq U_{n+1}^2\subseteq U_n$ and $m_{h\m}$ is continuous,  then
\begin{align*}
	x\in h^{-1}\cdot (V\cap \X_h)&\subseteq h^{-1}\cdot\left( \overline{U_{n+1}^x\cdot x}\cap \X_h\right)\\
	&\subseteq \overline{h^{-1}\cdot\left(( U_{n+1}^x\cdot x)\cap \X_h\right)}\\
	&=\overline{\left\{(h^{-1}g)\cdot x:g\in U_{n+1}^x\right\}}\\
	&\subseteq \overline{U_n^x\cdot x}.
\end{align*}
Finally, since $m_{h^{-1}}$ is a  homeomorphism  and $\X_{h}$ and  $\X_{h\m}$ are open, 
	then $h^{-1}\cdot (V\cap \X_h)$ is an open subset of  $\X$ containing  $x$; thus $x\in\text{int}\left(\overline{U_n^x\cdot x}\right).$

(iv) Let  $z\in V=\text{int}\left(\overline{U_{n+1}^x\cdot x}\right)$,   $W=\text{int}\left(\overline{U_{n+1}^z\cdot z}\right)$ and  $E=V\cap W$. Thus $E$ is a open neighborhood  of $z$ and it is not meager as $\X$ is Baire (by (ii)). Clearly  $U_{n+1}^x\cdot x \cap E$ and $U_{n+1}^z\cdot z \cap E$ are dense in $E$ and  by (ii) those sets are also nowhere meager in $E$.  Since $G$ is Polish and  each $U_m^y$ is open, then $U_m^y$ is also Polish for every $y$ and $m$. Thus  $U_{n+1}^x\cdot x$ and $U_{n+1}^z\cdot z$ are analytic. In summary,  $U_{n+1}^x\cdot x\cap E$ and  $U_{n+1}^z\cdot z\cap E$ are analytic, dense and nowhere meager subsets of  $E$ (as a subspace of $\X$), then  by Proposition \ref{Propinterfat}, there exists 
$$y\in (U_{n+1}^x\cdot x\cap E) \cap (U_{n+1}^z\cdot z\cap E).$$ 
Hence, there are $g\in U_{n+1}^x$  and $h\in U_{n+1}^z$ such that  $g\cdot x=y= h\cdot z$. Let $f=h^{-1}g$, then $f\cdot x=z$. Note that $f\in U_{n+1}U_{n+1} \subseteq U_{n}$ and $f\in G^x$, then $f\in U_{n}^x$. Hence $z\in U_n^x\cdot x.$

(v)  By (iv),  $\text{int}\left(\overline{U_{n+1}^x\cdot x}\right)\subseteq U_n^x\cdot x$ and thus $\text{int}\left(\overline{U_{n+1}^x\cdot x}\right)\subseteq \text{int}\left(U_n^x\cdot x\right).$ From (iii) we have that $x\in\text{int}\left(\overline{U_{n+1}^x\cdot x}\right).$ Therefore $x\in \text{int}\left(U_n^x\cdot x\right).$

(vi) Let $z\in U_n\cdot x$. By Lemma  \ref{Lem1TAA} there  exists $m\in \N$ such that $U_m^z\cdot z \subseteq  U_n^x\cdot x$ and by (v), $z\in \text{int}(U_m^z\cdot z)$. Thus $z\in \text{int}(U_m^z\cdot z) \subseteq U_n^x\cdot x$.   Hence $U_n^x\cdot x$ is open in $\X$.
\end{proof}


Now we are ready to give  the proof of the open mapping theorem for partial actions.

\begin{teo}\label{Teo.m_x.abierta} Let $G$ be a Polish  group and $m$  a  continuous transitive  partial action of $G$ in a not meager Hausdorff space $\X.$  Suppose   $G^y$ is open in $G$ for some $y\in \X$. Then  the map $m^x:G^x\ni g \mapsto g\cdot x\in \X$ is open for every $x\in \X$. 
\end{teo}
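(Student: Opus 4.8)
The plan is to derive the theorem almost immediately from Lemma \ref{Prop3TAA}, whose substance is part (vi): for every point $z\in\X$ and every $n$, the set $U_n^z\cdot z$ is an open neighborhood of $z$. Before invoking it at an arbitrary point one must upgrade the hypothesis: since $m$ is transitive and $G^y$ is open for \emph{some} $y$, Lemma \ref{gxg}(ii) yields that $G^x$ is open for \emph{every} $x\in\X$, so all of Lemma \ref{Prop3TAA} is available at every point of $\X$.

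Next I would record that each $m^x$ is open \emph{at the identity}: if $W$ is a neighborhood of $1$ in $G^x$, choose $n$ with $U_n\cap G^x\subseteq W$ (possible because $\{U_n\}$ is a basis at $1$ and $G^x$ is open in $G$); then $m^x(W)\supseteq m^x(U_n^x)=U_n^x\cdot x$, which is an open neighborhood of $m^x(1)=x$ by Lemma \ref{Prop3TAA}(vi). Thus $m^x(W)$ is a neighborhood of $x$.

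Finally, I would use homogeneity to move this to an arbitrary point. Let $O\subseteq G^x$ be open and $g_0\in O$, and set $z_0=g_0\cdot x=m^x(g_0)$. From (PA1) and Lemma \ref{gxg}(ii) applied to the pair $(g_0,x)$ one gets $G^{z_0}=G^xg_0^{-1}$, so $W:=Og_0^{-1}$ is an open neighborhood of $1$ in $G^{z_0}$; and (PA2) gives $(hg_0^{-1})\cdot z_0=(hg_0^{-1})\cdot(g_0\cdot x)=h\cdot x$ for every $h\in O$, i.e. $m^{z_0}(W)=m^x(O)$. By the previous paragraph applied at $z_0$, $m^{z_0}(W)$ is a neighborhood of $z_0$; hence $m^x(O)$ is a neighborhood of each of its points, and so it is open. (One could equally argue without passing through the identity: pick $n$ with $g_0U_n\subseteq O$, then pick $m$ with $U_mg_0\subseteq g_0U_n$ using continuity of conjugation $h\mapsto g_0^{-1}hg_0$ at $1$, and check via (PA2) that $U_m^{z_0}\cdot z_0=\{(hg_0)\cdot x:h\in U_m^{z_0}\}\subseteq m^x(O)$, the left-hand side being an open neighborhood of $z_0$ by Lemma \ref{Prop3TAA}(vi).)

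I do not anticipate a genuine obstacle at this stage: all the Baire-category effort has already been spent on Lemma \ref{Prop3TAA}. The only points demanding attention are bookkeeping in nature: partial actions are intrinsically ``left'' objects, so transporting the orbit picture from $x$ to $g_0\cdot x$ must be routed through the right translation $r_{g_0^{-1}}$ together with (PA1)--(PA2); and one must be sure that the global statement ``$G^x$ is open for all $x$'', which is exactly what licenses the use of Lemma \ref{Prop3TAA} at every point, really follows from the one-point hypothesis by transitivity via Lemma \ref{gxg}(ii).
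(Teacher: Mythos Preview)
Your proposal is correct and follows essentially the same route as the paper: reduce via Lemma~\ref{gxg}(ii) to the case where every $G^x$ is open, and then use Lemma~\ref{Prop3TAA}(vi) to place an open set $U_n^{z}\cdot z$ inside $m^x(O)$ around each image point $z=g_0\cdot x$. The paper carries out the translation step directly (choosing $O_1,O_g$ with $O_1O_g\subseteq U$ and noting $U_n^z\cdot z\subseteq(O_1^z g)\cdot x\subseteq U^x\cdot x$), which is precisely your parenthetical alternative; your primary phrasing via ``open at the identity plus right translation $r_{g_0^{-1}}$'' is the same computation packaged differently.
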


\begin{proof} By (ii) of Lemma \ref{gxg} we assume that $G^x$ is open for all $x\in \X$. 
Let  $U^x$ be an open non empty subset of $G^x,$ where $U$ is an open subset of  $G$. We check that  $U^x\cdot x$ is an open subset of  $G^x\cdot x$. Indeed, given $z\in U^x\cdot x$, then $z=g\cdot x,$ for some $g\in U^x$. As $U$ is open,  there are $O_1$ and  $O_g$ open neighborhoods of $1$ and $g$, respectively, such that  $O_1O_g\subseteq U$ and thus $O_1^zg\subseteq U^x$. Since $\left\{U_n\right\}_n$ is a basis of neighborhoods of $1$, there exists $n\in \N$ such that  $U_n^z\subseteq O_1^z$. Note that  $z\in U_n^z\cdot z$ and
$
 U_n^z\cdot z \subseteq O_1^z \cdot z =O_1^z \cdot (g\cdot x)= (O_1^zg)\cdot x \subseteq U^x \cdot x.
$
Thus, by  Lemma \ref{Prop3TAA} (vi), the set $U_n^z\cdot z$ is open  and hence $U^x \cdot x$ is also open.
\end{proof}

\begin{exe}\label{mob}{\bf M{\"o}bius transfromations}  \cite[p. 175]{CHL} The group $G={\rm SL}(2,\mathbb{R})$ acts partially on $\mathbb{R}$ by setting 
$$
g\cdot x=\frac{ax+b}{cx+d},\,\,\,\,\,\,{\rm where}\,\,\,\,\,\, g= \left( \begin{array}{ccc}
	a & b \\
	c & d \\ \end{array} \right) \in G.
$$
\end{exe}
\noindent Notice that for all $g\in G$ the set  $\X_g=\{x\in\mathbb{R}\mid cx+d\neq 0\}$ is open and the partial action is  continuous. For $x\in \mathbb{R} $ let $t_x=\left(\begin{array}{ccc}
1 & x \\
0 & 1 \\ \end{array}\right) ,$ then for $y\in \mathbb{R}$ one has that $t_{y-x}\cdot x=y.$ 
Moreover, since
$$G^0=\left\{\left(\begin{array}{ccc}
a & b \\
c & d \\ \end{array}\right) \in G: d\neq 0\right\}$$ is open,  
then by (ii) of Lemma \ref{gxg}  we have a transitive partial action for which $G^x$ is open, for all $x\in \mathbb{R}.$ By Theorem \ref{Teo.m_x.abierta},  the map  $m^x:G^x\ni g \mapsto g\cdot x\in \mathbb{R}$ is open, for every $x\in  \mathbb{R}.$ 

\subsection{On the enveloping space  of a transitive partial action}

In this section we use   the open mapping principle to get  an improvement of Theorem \ref{xgpol} for transitive  partial actions. 
We recall a  result  that gives a (set theoretic) relation between  the enveloping space $\X_G$ and  the quotient $G/G_x,$ for $x\in \X.$
\begin{teo}
\label{global-transitiva}
\cite[Proposition 2.4, Theorem 2.6]{CHL}
Let $m$ be a transitive partial  action of a group $G$ over $\X$. Then  the enveloping action $\mu$ of $G$ over $\X_G$ is  transitive and  equivalent to the left coset action of $G$ over $G/G_x$. More precisely,  the map 
\begin{equation}\label{isophi}\phi \colon  G/G_ { x} \ni gG_{x}\to [g,x]\in \X_G
\end{equation}  is a bijection such that $\phi(h gG_x)=\mu_h\phi( gG_x),$ for any $h\in G.$
\end{teo}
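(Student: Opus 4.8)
The plan is to verify directly that $\phi$ is a well-defined, equivariant bijection, and to deduce transitivity of $\mu$ along the way. First, using (PA1) and (PA2) one checks that $G_x$ is closed under products and inverses, so that $G_x$ is a subgroup of $G$ and the coset space $G/G_x$ makes sense. Next I would establish transitivity of $\mu$: fix $x\in\X$ and take an arbitrary $[h,y]\in\X_G$. By transitivity of $m$ there is $k\in G^x$ with $k\cdot x=y$, and then the pair $(1,k\cdot x)$ is $R$-related to $(k,x)$ directly from the definition of $R$ (since $k\cdot x\in\X_k$ and $m_{k\m}(k\cdot x)=x$); hence $[1,y]=[k,x]=\mu_k(\iota(x))$, and applying $\mu_h$ gives $[h,y]=\mu_{hk}(\iota(x))$. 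As $x$ and $[h,y]$ were arbitrary, $\mu$ is transitive.

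For the map $\phi$ itself: if $gG_x=g'G_x$, write $g'=gf$ with $f\in G_x$; by Lemma \ref{estabilizador} we have $f\in G_{\iota(x)}$, i.e.\ $[f,x]=[1,x]$, so $[g',x]=\mu_g([f,x])=\mu_g([1,x])=[g,x]$, proving $\phi$ is well defined. Equivariance is immediate from the definition of $\mu$: $\phi(hgG_x)=[hg,x]=\mu_h([g,x])=\mu_h\phi(gG_x)$. Surjectivity is exactly the computation just made for transitivity of $\mu$ (every $[h,y]$ equals $[hk,x]=\phi(hkG_x)$). Finally, for injectivity, suppose $[g,x]=[g',x]$; unwinding the definition of $R$ this means $x\in\X_{g\m g'}$ and $m_{(g')\m g}(x)=x$, and the latter says $(g')\m g\in G_x$, whence $gG_x=g'G_x$.

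I do not expect a serious obstacle here: the whole argument is bookkeeping with the equivalence relation $R$ and the formula for $\mu$. The only non-formal ingredient is Lemma \ref{estabilizador}, which identifies $G_x$ with the $\mu$-stabilizer of $\iota(x)$ and is precisely what makes $\phi$ descend to cosets; the one place where an error could slip in is the ordering of the indices in the definition of $R$, so I would double-check that direction carefully at each use.
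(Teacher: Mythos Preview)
Your argument is correct. The paper does not supply its own proof of this statement; it is quoted from \cite[Proposition 2.4, Theorem 2.6]{CHL} and used as a black box. What you wrote is the expected direct verification: transitivity of $\mu$ from transitivity of $m$, and then well-definedness, equivariance, surjectivity and injectivity of $\phi$ checked against the definition of $R$. One small remark: for well-definedness you invoke Lemma~\ref{estabilizador}, which is perfectly fine, but you can also check $(gf,x)\,R\,(g,x)$ straight from the definition of $R$ (since $f\in G_x$ gives $x\in\X_{f^{-1}}$ and $m_f(x)=x$), which keeps the argument entirely self-contained.
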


The following result is straightforward. 

\begin{lema}
\label{phi-continua}
Let $G$ be a Polish  group and $m$  a transitive  continuous partial action of $G$ on a  space $\X$. Then the map  $\phi$  defined in \eqref{isophi} is continuous.
\end{lema}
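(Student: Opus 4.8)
The plan is to factor $\phi$ through the canonical quotient map $\pi\colon G\to G/G_x$ and then invoke the universal property of the quotient topology.

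First I would observe that the map $\psi\colon G\to\X_G$ defined by $\psi(g)=[g,x]=q(g,x)$ is continuous. Indeed, $\psi$ is the composition of the map $G\ni g\mapsto (g,x)\in G\times\X$ with the quotient map $q$ of \eqref{que}. The former is continuous, since its first coordinate is the identity of $G$ and its second coordinate is constant; the latter is continuous by Theorem \ref{Abadie}~(1) (here it is relevant that $q$ is defined on the whole product $G\times\X$, not merely on $G*\X$, which is the case by the construction of $\X_G$). Hence $\psi$ is continuous.

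Next, by Theorem \ref{global-transitiva} the class $[g,x]$ depends only on the coset $gG_x$, so $\psi$ factors as $\psi=\phi\circ\pi$, where $\pi\colon G\to G/G_x$ is the canonical projection and $\phi$ is precisely the map of \eqref{isophi}. Since $G/G_x$ carries the quotient topology, a function out of $G/G_x$ is continuous if and only if its composition with $\pi$ is continuous; as $\phi\circ\pi=\psi$ is continuous, it follows that $\phi$ is continuous, as claimed.

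There is essentially no obstacle here: the argument is a routine diagram chase, which is why the statement is labelled straightforward. The only point one must be careful about is that the domain of $q$ is all of $G\times\X$ and that $q$ is continuous there (Theorem \ref{Abadie}), together with the fact that $\phi$ is well defined, i.e.\ the factorization $\psi=\phi\circ\pi$ holds, which is supplied by Theorem \ref{global-transitiva}; note in particular that Polishness of $G$ plays no role in this lemma beyond what is needed in those cited results.
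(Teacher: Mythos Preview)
Your argument is correct and is exactly the routine verification the paper has in mind when it calls the lemma ``straightforward'' and omits the proof: factor $\phi$ through the quotient $\pi\colon G\to G/G_x$ via the continuous map $g\mapsto q(g,x)$ and use the universal property of the quotient topology. There is nothing to add.
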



Now we show that, for transitive actions,  $\X_G$ is Polish when it  is Hausdorff.

\begin{teo} 
\label{coset-space}
Let $G$ be a Polish  group and $m$  a  continuous transitive partial  action of $G$ on a not meager space $\X$ such that $G*\X$ is open.   The following statements are equivalent. 
\begin{enumerate}
\item  $\X_G$ is Hausdorff. 

\item   $\X_G$ is $T_1$ and is homeomorphic to $G/G_{x},$  for any $x\in\X$.

\item $\X_G$ is Polish. 
\end{enumerate}

\end{teo}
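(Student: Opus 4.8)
The plan is to prove the cycle of implications $(3)\Rightarrow(1)\Rightarrow(2)\Rightarrow(3)$, using Theorem \ref{Teo.m_x.abierta} (the open mapping theorem just established) as the main engine for $(1)\Rightarrow(2)$. The implication $(3)\Rightarrow(1)$ is immediate since every Polish (indeed, every metrizable) space is Hausdorff. So the real content lies in the other two steps.

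For $(1)\Rightarrow(2)$: assume $\X_G$ is Hausdorff, hence in particular $T_1$. Fix $x\in\X$. By Theorem \ref{global-transitiva} the map $\phi\colon G/G_x\to\X_G$ of \eqref{isophi} is a bijection, and by Lemma \ref{phi-continua} it is continuous. To get that it is a homeomorphism I must show $\phi$ is open, equivalently that the orbit map $G\ni g\mapsto [g,x]\in\X_G$ is open. The idea is to transfer this to the partial action $m$ on $\X$ itself. Since $G*\X$ is open, by Theorem \ref{Abadie}(3)--(4) the map $\iota\colon\X\to\iota(\X)$ is a homeomorphism onto an \emph{open} subset of $\X_G$; note $\iota(x)=[1,x]$. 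Because the $\mu$-orbit of $[1,x]$ is all of $\X_G$ (transitivity, Theorem \ref{global-transitiva}) and $\iota(\X)$ is open in $\X_G$, a covering argument (writing $\X_G=\bigcup_{g\in F}\mu_g(\iota(\X))$ for countable $F$ by Lindel\"ofness of $G$, using that the $\mu_g$ are homeomorphisms) shows that it suffices to prove openness of the orbit map near $1$, i.e.\ that for a neighborhood $U$ of $1$ the set $\{[g,x]:g\in U^x\}$ is a neighborhood of $[1,x]$ in $\X_G$. But under the homeomorphism $\iota$ this set corresponds exactly to $U^x\cdot x\subseteq\X$ (here one uses that $m$ is the restriction of $\mu$ along $\iota$, so $[g,x]=\mu_g([1,x])=\iota(g\cdot x)$ whenever $g\in G^x$), and by Theorem \ref{Teo.m_x.abierta} the map $m^x$ is open, so $U^x\cdot x$ is open in $\X$, hence $\iota(U^x\cdot x)$ is open in $\iota(\X)$, hence open in $\X_G$. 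This gives that $\phi$ is open, so $\X_G$ is homeomorphic to $G/G_x$, proving (2). I should first check the hypotheses of Theorem \ref{Teo.m_x.abierta} are met: $G$ Polish, $m$ continuous transitive, $\X$ not meager and Hausdorff, and $G^y$ open for some $y$ — the last follows because $G*\X$ is open, as $G^y$ is a section of the open set $G*\X$.

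For $(2)\Rightarrow(3)$: assume $\X_G$ is $T_1$ and homeomorphic to $G/G_x$. By Lemma \ref{estabilizador}, since $\X_G$ is $T_1$ the stabilizer $G_x$ is closed in $G$. A closed subgroup of a Polish group is a Polish group, and the coset space $G/H$ of a Polish group by a closed subgroup is a Polish space (this is standard — e.g.\ it is metrizable by the Birkhoff--Kakutani argument applied to the quotient and completely metrizable / $G_\delta$). Hence $G/G_x$ is Polish, and therefore so is $\X_G$.

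The main obstacle I anticipate is the openness argument in $(1)\Rightarrow(2)$: one must be careful that $\X_G$ need not be metrizable a priori, so the reduction to openness ``near the identity'' cannot use sequences or metric balls and must be done purely topologically via the Lindel\"of covering of $G$ and the fact that each $\mu_g$ is a homeomorphism of $\X_G$. One must also verify cleanly the identification $\iota(U^x\cdot x)=\{[g,x]:g\in U^x\}$ and that this set is a $\mu$-neighborhood basis element of $[1,x]$, using the openness of $\iota(\X)$ from Theorem \ref{Abadie}(4) (which is exactly where the hypothesis $G*\X$ open is used). Everything else — $(3)\Rightarrow(1)$ and the coset-space facts in $(2)\Rightarrow(3)$ — is routine.
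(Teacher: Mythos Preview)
Your proof is correct and follows the same overall cycle as the paper, but your argument for $(1)\Rightarrow(2)$ takes a slightly different path. The paper applies Theorem~\ref{Teo.m_x.abierta} directly to the \emph{global} action $\mu$ on $\X_G$: it first observes that $\X_G$ is non-meager (because the quotient map $q$ of \eqref{que} is continuous and open and $G\times\X$ is non-meager) and Hausdorff by hypothesis, so Theorem~\ref{Teo.m_x.abierta} yields that $\mu^{[1,x]}\colon G\to\X_G$ is open; since $\phi\circ\pi=\mu^{[1,x]}$, openness of $\phi$ is immediate. You instead apply Theorem~\ref{Teo.m_x.abierta} to the partial action $m$ on $\X$, obtain that $m^x$ is open, push this through the open embedding $\iota$ to get openness of $\mu^{[1,x]}$ at the identity, and then extend by homogeneity. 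Both routes work; the paper's is a line shorter and avoids the reduction-to-$1$ step, while yours sidesteps having to verify that $\X_G$ is non-meager. Two small remarks on your write-up: the Lindel\"of covering of $\X_G$ is unnecessary, since the reduction to openness at $1$ follows at once from $\mu^{[1,x]}(g_0U)=\mu_{g_0}\bigl(\mu^{[1,x]}(U)\bigr)$ and the fact that each $\mu_{g_0}$ is a homeomorphism of $\X_G$; and you should explicitly record (as the paper does) that $\X$ is Hausdorff because $\iota$ embeds it into the Hausdorff space $\X_G$, since this is a hypothesis of Theorem~\ref{Teo.m_x.abierta} that you list but do not verify.
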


\begin{proof} 
(1) $\Rightarrow$ (2). Suppose that $\X_G$ is Hausdorff.  Since $\iota$ is an embedding (see Theorem \ref{Abadie}), then $\X$ is Hausdorff.  Let $x\in \X$.  We will show that $\phi$ is an homeomorphism.  By Lemma \ref{phi-continua}, we only need to show that $\phi$ is open. 
First we verify  that we can apply the open mapping Theorems  to  the  enveloping total  action $\mu$ of $G$ over $\X_G$.  In fact,  $\X_G$ is not meager in itself, as the map $q$ defined in \eqref{que} is continuous  and open and $\X$ and $G$ are not meager.  By Theorems \ref{global-transitiva} and \ref{Abadie},   $\mu$  is a  continuous transitive  action. Thus, by   Theorem \ref{Teo.m_x.abierta}, the map $\mu^{[1,x]}\colon G\to \X_G$ is open.   Now we show that  $\phi$ is open. Let $\pi\colon G\to G/G_x$ be the quotient map.  It suffices to show that if $O\subseteq G$ is open, then $\phi (\pi(O))$ is open in $\X_G$.   In fact,  since $\phi\circ \pi=\mu^{[1,x]}$ we are done. 

(2) $\Rightarrow$ (3). It is a classical result that if $H$ is a closed subgroup of Polish group $G$, then $G/H$ is also Polish (see \cite[Theorem 2.2.10]{GA}).  But by  Lemma \ref{estabilizador} we   know that $G_x$ is closed.

(3) $\Rightarrow$ (1) This part  is obvious.  \end{proof}

\begin{cor}
\label{Xpolaco}
Let $G$ be a Polish  group and $m$  a  continuous transitive partial  action of $G$ on a not meager space $\X$ such that $G*\X$ is open.  If $\X_G$ is Hausdorff, then $\X$ is Polish. 
\end{cor}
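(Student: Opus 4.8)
The plan is to deduce the corollary directly from Theorem \ref{coset-space}. Assume $\X_G$ is Hausdorff. Under the standing hypotheses of the corollary ($G$ Polish, $m$ continuous transitive, $\X$ not meager, $G*\X$ open), Theorem \ref{coset-space} applies verbatim, and the implication (1) $\Rightarrow$ (3) gives that $\X_G$ is Polish. It remains to see that $\X$ itself is Polish.

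For this I would invoke Theorem \ref{Abadie}: since $G*\X$ is open, part (4) tells us that $\iota(\X)$ is an open subset of $\X_G$, and part (3) tells us that $\iota\colon\X\to\iota(\X)$ is a homeomorphism. Thus $\X$ is homeomorphic to an open subspace of the Polish space $\X_G$. An open subspace of a Polish space is Polish (it is $G_\delta$ in itself trivially, being open, and a $G_\delta$ subspace of a Polish space is Polish; see \cite[Theorem 3.11]{KE} or the standard Alexandrov characterization). Therefore $\X$ is Polish, which is what we wanted.

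I do not expect any genuine obstacle here: the content has already been extracted in Theorem \ref{coset-space}, and the only additional ingredient is the elementary permanence fact that open (hence $G_\delta$) subspaces of Polish spaces are Polish, combined with the embedding $\iota$ from Theorem \ref{Abadie}(3)--(4). The one point worth stating carefully is that we are entitled to use $\iota(\X)$ open, and this is exactly where the hypothesis that $G*\X$ is open is used (beyond its role in Theorem \ref{coset-space}).
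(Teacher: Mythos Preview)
Your proof is correct and follows exactly the paper's approach: invoke Theorem~\ref{coset-space} to get $\X_G$ Polish, then use Theorem~\ref{Abadie}(3)--(4) to realize $\X$ as an open subspace of $\X_G$. The paper's own proof is just a one-line compression of precisely these steps.
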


\begin{proof}
By Theorem \ref{Abadie}, $\iota(\X)$ is open and $\iota$ is an embedding, therefore $\X$ is Polish.
\end{proof}

\begin{rem}
\label{glometric} 
Suppose  $\X_G$ is  metrizable, then the  argument used in the proof of $(1) \Rightarrow (2)$ in  Theorem \ref{coset-space} shows that the open mapping theorem \ref{Teo.m_x.abierta}  follows  from the corresponding theorem for global actions. To see this, observe that $\iota \circ m^x=\mu^{[1,x]}\restriction G^x,$  and thus $\iota \circ m^x$ is an open map.  Since   $\iota$ is a continuous injection  (see Theorem \ref{Abadie}), then $m^x$ is open. 

%
%
\end{rem}

\section{On the Effros'  theorem}

In this section we extend Effros'  result \cite[Theorem 2.1]{EF} to the context of partial actions.

\begin{defi}  Let $m$ be a partial action on $\X$. The orbit equivalence relation $E_G^p$ on $\X$ is defined by
$$x E_G^p y \Longleftrightarrow \ex\, g\cdot x\,\,\,\text{and}\,\,\, g\cdot x=y ,$$ for some $g\in G.$ 
\end{defi}

The set of equivalence classes  $\X/E_G^p$  is endowed with the quotient topology. By  \cite[Lemma 3.2]{PU} the quotient map $ \X\ni x\mapsto [x] \in \X/E_G^p$ is continuous  and open, from this follows  that if $\X$ is second-countable,  so is $\X/E_G^p.$

\begin{teo}\label{ef} Let $G$ be a Polish group and $m$ be a continuous  partial action of  $G$ on the Polish space $\X.$
 Then  the following assertions are equivalent.
\begin{enumerate}
\item  $E_G^p$ is $G_\delta.$
\item $G^x\cdot x$ is $G_\delta$ in $\X,$ for every $x\in \X$.
\item $\X/E_G^p$ is $T_0.$ That is,  $ \overline{\{x\}}\neq \overline{\{y\}}$ for any $x,y\in\X/E_G^p $  with $x\ne y$.
\end{enumerate}
\end{teo}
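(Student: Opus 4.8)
The plan is to prove the cycle of implications $(1)\Rightarrow(3)\Rightarrow(2)\Rightarrow(1)$, mirroring the classical Effros argument but carefully tracking where partiality intervenes. The key point throughout is that the orbit $G^x\cdot x$ is exactly the $E_G^p$-equivalence class of $x$, and that by \cite[Lemma 3.2]{PU} the quotient map $\pi\colon\X\to\X/E_G^p$ is continuous and open, so $\X/E_G^p$ is second-countable (indeed Polish-like in the relevant sense) and one has the usual correspondence between saturated subsets of $\X$ and subsets of $\X/E_G^p$.

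\medskip

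First I would do $(1)\Rightarrow(3)$. Assume $E_G^p$ is $G_\delta$ as a subset of $\X\times\X$. Suppose toward a contradiction that $\X/E_G^p$ is not $T_0$, so there are distinct classes $[x]\neq[y]$ with $\overline{\{[x]\}}=\overline{\{[y]\}}$ in the quotient. Topological separation failing means that $[x]$ is in the closure of $[y]$ and vice versa; pulling back via the open continuous map $\pi$, this says every open $E_G^p$-saturated neighborhood of $x$ meets the orbit of $y$ and conversely. The standard trick is to produce a point $(x,y)$ in the closure of $E_G^p$ that is not in $E_G^p$, or better, to use that a $G_\delta$ equivalence relation whose quotient is not $T_0$ yields a contradiction via a Baire category / Kuratowski–Ulam argument on $\X\times\X$: one shows the diagonal-type set forces $xE_G^p y$. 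Concretely, I would use that $E_G^p=\bigcap_n W_n$ with each $W_n$ open, and that the non-$T_0$ hypothesis gives, for each $n$, that the orbit of $y$ enters every open saturated neighborhood of $x$; feeding this into a descending sequence built from the $U_n$-basis of $G$ produces $g$ with $g\cdot x=y$.

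\medskip

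For $(3)\Rightarrow(2)$: this is where I would lean on the open mapping machinery of Section~3. Fix $x$. The orbit $O=G^x\cdot x$, with the subspace topology from $\X$, carries the transitive continuous partial action of $G$ obtained by restriction — provided $G^z$ is open in $G$ for $z\in O$, which I would need to arrange. Actually the cleaner route, as in van Mill's treatment, is: $\X/E_G^p$ is $T_0$ and second-countable, hence by a standard fact $T_0$ second-countable spaces that are images under continuous open maps of Polish spaces have the property that the equivalence class $\pi^{-1}(\pi(x))$ is $G_\delta$ — one writes $\{[x]\}=\bigcap_n V_n$ for a suitable countable family using the $T_0$ separation (for each $[y]\neq[x]$ pick a basic open set containing exactly one of them), then $G^x\cdot x=\pi^{-1}(\{[x]\})=\bigcap_n\pi^{-1}(V_n)$ is $G_\delta$ in $\X$. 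I expect this implication to be the shortest, essentially a point-set topology computation once second-countability of the quotient is in hand.

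\medskip

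For $(2)\Rightarrow(1)$: Assume each orbit $G^x\cdot x$ is $G_\delta$. Then $E_G^p=\bigcup_{x}(\{x\}\times G^x\cdot x)$, but that union is uncountable, so the direct approach fails; instead I would write $E_G^p$ as the preimage of the diagonal-type relation under the map $\X\times\X\to(\X/E_G^p)\times(\X/E_G^p)$, reducing to showing the diagonal of $\X/E_G^p$ is $G_\delta$, which holds iff $\X/E_G^p$ is (second-countable and) $T_0$-plus-the-right-separation — but (2) only gives that points are $G_\delta$. The honest route is the classical one: use that $E_G^p$ is analytic (it is the image of the Polish space $G*\X$ under the continuous map $(g,x)\mapsto(x,g\cdot x)$, since $G*\X$ is a $G_\delta$, hence Polish, subset of $G\times\X$ when... — here I must check $G*\X$ is Borel/Polish), together with (2), and invoke a theorem that an analytic equivalence relation all of whose classes are $G_\delta$ is itself $G_\delta$ — this is a Burgess–Miller / Hurewicz-type result. \textbf{The main obstacle} I anticipate is exactly this last step: controlling the descriptive complexity of $E_G^p$ and of the domain $G*\X$ (which need not be open here, unlike in Section~3), and citing or reproving the right effective-descriptive-set-theoretic lemma that upgrades ``analytic with $G_\delta$ classes'' to ``$G_\delta$''. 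If $G*\X$ is not nicely definable, one may instead need to run the whole argument inside the enveloping space $\X_G$ via $\iota$, using Theorem~\ref{Abadie}, and transfer the classical Effros theorem for the global action $\mu$ back down to $m$ — that is the fallback strategy I would keep in reserve.
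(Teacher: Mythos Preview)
The paper's own proof is a one-liner: the argument is \emph{identical} to the global case (Gao, Theorem~3.4.4), once one observes that $\X/E_G^p$ is second countable. Nothing in that classical proof uses the group action beyond the facts that orbits are the equivalence classes and that the quotient map is open and continuous --- both of which hold here by \cite[Lemma~3.2]{PU}.

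Your proposal has a genuine, self-inflicted gap. The classical cycle runs $(1)\Rightarrow(2)\Rightarrow(3)\Rightarrow(1)$, not your reversed order, and each step is elementary:
\begin{itemize}
\item $(1)\Rightarrow(2)$: horizontal sections of a $G_\delta$ subset of $\X\times\X$ are $G_\delta$.
\item $(2)\Rightarrow(3)$: if two distinct classes had the same closure in $\X/E_G^p$, their preimages would be two disjoint dense $G_\delta$ subsets of a common closed (hence Polish) set in $\X$, contradicting Baire.
\item $(3)\Rightarrow(1)$: with $\{V_n\}$ a countable base for $\X/E_G^p$ and $U_n=\pi^{-1}(V_n)$, the $T_0$ property gives $E_G^p=\bigcap_n\big[(U_n\times U_n^c)\cup(U_n^c\times U_n)\big]^c$, a countable intersection of $G_\delta$ sets in the Polish space $\X\times\X$.
\end{itemize}
By choosing the opposite cycle you are forced to attack $(2)\Rightarrow(1)$ head-on, and your candidate tools --- a Burgess--Miller-type ``analytic with $G_\delta$ classes implies $G_\delta$'' statement (which is not a theorem in that generality), or transfer through $\X_G$ (which need not be Polish under the present hypotheses, cf.\ Theorem~\ref{xgpol}) --- do not work. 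The obstacle you flag simply evaporates once you route through $(3)$, and indeed the computation you already sketched for $(3)\Rightarrow(2)$ is essentially the same one that gives $(3)\Rightarrow(1)$.

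Your sketch of $(1)\Rightarrow(3)$ is also off: building a descending sequence from the neighborhood basis $\{U_n\}$ of $1\in G$ to manufacture $g$ with $g\cdot x=y$ belongs to the open-mapping argument of Section~3, not here. The clean route is the trivial $(1)\Rightarrow(2)$ followed by the Baire argument for $(2)\Rightarrow(3)$. No appeal to Section~3, to analyticity of $E_G^p$, or to the enveloping space is needed anywhere.
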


\proof
It is  shown exactly as in the case of a global action (see for instance \cite[Theorem 3.4.4]{GA}).
Only recall that  $\X/E_G^p$   is second countable.\endproof

\begin{prop}
\label{propgx}
Let $G$ be a Polish group,  $\X$ a metric space and $m$ a topological partial action of $G$ on $\X.$ Then for a fixed $x\in\X$ the following assertions hold.
\begin{enumerate}
\item The quotient map $ \pi_p\colon G^x\ni g\to gG_x\in G^x/G_ x  $ is continuous and open.
\item  The map
\begin{equation}
\label{hati}
\hat{\iota}\colon G^x/G_ x\ni gG_x\to  gG_{\iota(x)}\in G/G_ {\iota(x)}
\end{equation}
is a topological embedding. 

\item If $G^x$ is $G_\delta$ and $G_x$ is closed {in $G$},   then $ G^x/G_ x$ is a Polish space.
\end{enumerate}
\end{prop}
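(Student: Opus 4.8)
My plan is to prove the three assertions in order; (1) and (2) are formal, resting on Lemma \ref{gxg} (the algebra of $G^x$ and $G_x$) and on Lemma \ref{estabilizador} (which identifies $G_x$ with $G_{\iota(x)}$), while the only genuine point is in (3). Throughout I use that $G_x$ is a subgroup of $G$ contained in $G^x$ (immediate from (PA1)--(PA3)).

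For (1), continuity of $\pi_p$ is automatic from the definition of the quotient topology. For openness I would take an open set of the form $U=O\cap G^x$ with $O$ open in $G$, and compute $\pi_p^{-1}(\pi_p(U))=UG_x$. Lemma \ref{gxg}(i) gives $ut\in G^x$ whenever $u\in G^x$ and $t\in G_x$ (apply it to the pair $(u,x)$ and the element $ut$, noting $u^{-1}(ut)=t\in G_x$), so $UG_x\subseteq G^x$; and Lemma \ref{gxg}(ii) gives $G^x t=G^{t^{-1}\cdot x}=G^x$ for $t\in G_x$, whence $Ut=Ot\cap G^x$ and therefore $UG_x=(OG_x)\cap G^x$, which is open in $G^x$ because $OG_x=\bigcup_{t\in G_x}Ot$ is open in $G$. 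As $\pi_p$ is a quotient map, $\pi_p(U)$ is open.

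For (2), $\hat\iota$ is well defined and injective because for $g,g'\in G^x$ one has $gG_x=g'G_x\iff g^{-1}g'\in G_x\iff g^{-1}g'\in G_{\iota(x)}\iff gG_{\iota(x)}=g'G_{\iota(x)}$, using $G_x=G_{\iota(x)}$ from Lemma \ref{estabilizador}. Writing $\pi\colon G\to G/G_{\iota(x)}$ for the canonical projection and $j\colon G^x\hookrightarrow G$ for the inclusion, one has $\hat\iota\circ\pi_p=\pi\circ j$, which is continuous, so $\hat\iota$ is continuous because $\pi_p$ is a quotient map (an open surjection, by (1)). Finally, $\hat\iota$ is an embedding once it is seen to be open onto its image $\pi(G^x)$: for $U=O\cap G^x$ open in $G^x$ one gets $\hat\iota(\pi_p(U))=\pi(U)=\pi(O)\cap\pi(G^x)$, the only non-obvious inclusion being that if $\pi(a)=\pi(b)$ with $a\in O$ and $b\in G^x$, then $b^{-1}a\in G_{\iota(x)}=G_x$, so $a\in G^x$ by Lemma \ref{gxg}(i), hence $a\in O\cap G^x=U$; and $\pi(O)$ is open because $\pi$ is open.

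For (3), the plan is: $G^x$ is $G_\delta$ in the Polish group $G$, hence Polish; $G_{\iota(x)}=G_x$ is a closed subgroup of $G$ by Lemma \ref{estabilizador}, so $G/G_{\iota(x)}$ is Polish by \cite[Theorem 2.2.10]{GA}; and by (2), $G^x/G_x$ is homeomorphic to the (thus metrizable) subspace $\pi(G^x)$ of $G/G_{\iota(x)}$, so it suffices to show $\pi(G^x)$ is $G_\delta$ in $G/G_{\iota(x)}$. Now $\pi^{-1}(\pi(G^x))=G^xG_x=G^x$ (again by Lemma \ref{gxg}(i)), so the preimage is $G_\delta$, hence Polish, and $\pi$ restricted to it is an open continuous surjection onto $\pi(G^x)$. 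The step I expect to be the main obstacle --- the one needing a careful citation rather than a one-line argument --- is the conclusion from here: a metrizable open continuous image of a Polish space is Polish (equivalently, the quotient of a Polish space by a closed equivalence relation with open quotient map is Polish; here the relation on $G^x$ is $\{(g,g')\mid g^{-1}g'\in G_x\}$, which is closed as the preimage of the closed set $G_x$ under $(g,g')\mapsto g^{-1}g'$, and $\pi_p$ is open by (1)). That the ``open'' hypothesis is essential can be checked by a Baire-category argument showing that $\mathbb{Q}$ is not an open continuous image of any Polish space.
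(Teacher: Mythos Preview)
Your proof is correct and follows essentially the same route as the paper: parts (1) and (2) are argued identically via Lemma~\ref{gxg} and the identification $G_x=G_{\iota(x)}$ from Lemma~\ref{estabilizador}, and part (3) is also the paper's argument---embed $G^x/G_x$ in the Polish space $G/G_{\iota(x)}$ to get metrizability, then invoke Sierpi\'nski's theorem (a metrizable open continuous image of a Polish space is Polish, \cite[Theorem~2.2.9]{GA}) for the open surjection from the Polish space $G^x$. The only cosmetic differences are that you route (3) through $\pi\restriction G^x$ rather than directly through $\pi_p$, and that in (3) the closedness of $G_x$ is given as a hypothesis rather than deduced from Lemma~\ref{estabilizador}.
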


\proof (1) Let $V$ be an open subset of $G^x,$ and take $W\subseteq G$ open such that $V=W\cap G^x$. Notice that  $Vg=Wg\cap G^xg$ and $G^xg=G^x,$ for all $g\in G_x.$
Then $\pi_p\m\pi_p[V]=VG_x=\bigcup_{g\in  G_x} Vg= WG_x \cap G^x$ is open in $G^x.$
\medskip

\noindent (2) It is clear that $\hat{\iota}$ is well defined and injective. Moreover,  $\hat\iota\circ \pi_p=\pi\restriction G^x,$ where $\pi$ is the canonical projection $G\to G/G_{\iota(x)},$ then $\hat{\iota} \circ \pi_p$ is continuous, which implies that $\hat{\iota}$ is continuous.
Let $Y=\hat{\iota}[G^x/G_ x]$. We will show that $\hat{\iota}:G^x/G_ x\to  Y$ is open.  Let $U\subseteq G$ open  we prove that
 $$\{gG_{\iota(x)}:\, g\in U\cap G^x\}=\pi[U]\cap Y.$$
It is clear that $\subseteq$ holds. For the other inclusion, let $h\in U$  such that $\pi(h)\in Y$. Let $g\in G^x$ such that $hG_{\iota(x)}=gG_{\iota(x)}$. Since $G_x=G_{\iota(x)}$, then
$g\m h\in G_x$ and by (i) of Lemma \ref{gxg} we have that $h\in G^x.$


\medskip

\noindent (3)   Since $G_x=G_ {\iota(x)}$ is a closed subgroup of  $G$ the space $G/G_ {\iota(x)}$ is Polish.  By (2),  $G^x/G_ x \simeq \hat \iota [G^x/G_ x]\subseteq G/G_ {\iota(x)}$, thus $G^x/G_x$ is metrizable. Finally, by (1), $G^x/G_x$ is the continuous open image of the Polish space $G^x$ (as it is  a $G_\delta$ subset of a Polish space), therefore  it is Polish by Sierpinski's theorem (see \cite[Theorem 2.2.9]{GA}). \endproof

\smallskip

Now we  show that the orbits are Borel, this is  a generalization of  a well known theorem about Polish group actions (see \cite[Proposition 3.1.10]{GA}).
\begin{cor}
Let $G$ be a Polish group, $m$  a continuous  partial action of  $G$ on the Polish space $\X$ and $x\in \X$. If   $G^x$ is $G_\delta$ and $G_x$ is closed in $G$, then   $G^x\cdot x$ is Borel.
\end{cor}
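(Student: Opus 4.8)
The plan is to exhibit the orbit $G^x\cdot x$ as the injective continuous image of a Polish space and then invoke the Lusin--Souslin theorem. The orbit map $m^x\colon G^x\ni g\mapsto g\cdot x\in\X$ has range exactly $G^x\cdot x$, but it is not injective in general, its fibres being the right cosets of $G_x$; so the first move is to factor it through the quotient $G^x/G_x$, obtaining
\[
\bar m^x\colon G^x/G_x\ni gG_x\longmapsto g\cdot x\in\X .
\]

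First I would verify, using only the partial action axioms, that $\bar m^x$ is well defined and injective. For well-definedness, if $g^{-1}g'\in G_x$ then $\ex (g^{-1}g')\cdot x$ with $(g^{-1}g')\cdot x=x$, and since $\ex g\cdot x$, axiom (PA2) gives $\ex g'\cdot x$ and $g'\cdot x=g\cdot\big((g^{-1}g')\cdot x\big)=g\cdot x$. For injectivity, if $g\cdot x=g'\cdot x=:y$ with $g,g'\in G^x$, then (PA1) gives $\ex g^{-1}\cdot y$ and $g^{-1}\cdot y=x$, hence $\ex g^{-1}\cdot(g'\cdot x)$ and (PA2) yields $\ex (g^{-1}g')\cdot x$ with $(g^{-1}g')\cdot x=g^{-1}\cdot y=x$; thus $g^{-1}g'\in G_x$ and $gG_x=g'G_x$.

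Next comes continuity: by Proposition \ref{propgx}(1) the projection $\pi_p\colon G^x\to G^x/G_x$ is continuous and open, hence a topological quotient map, and since $m^x=\bar m^x\circ\pi_p$ is continuous, $\bar m^x$ is continuous. Now the hypotheses enter: by Proposition \ref{propgx}(3), since $G^x$ is $G_\delta$ and $G_x$ is closed in $G$, the space $G^x/G_x$ is Polish; and $\X$ is Polish by assumption. Therefore $\bar m^x$ is an injective continuous map between Polish spaces, so the Lusin--Souslin theorem (see \cite[Theorem 15.1]{KE}) applies and shows that $\bar m^x(G^x/G_x)=G^x\cdot x$ is Borel in $\X$.

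I do not anticipate a genuine difficulty: the whole argument rests on Proposition \ref{propgx}(3), already available, together with the classical Lusin--Souslin theorem. The only step demanding care is the bookkeeping with (PA1) and (PA2) in checking that $\bar m^x$ is well defined and injective --- one must be sure that each product ($g\cdot x$, $g^{-1}\cdot y$, $(g^{-1}g')\cdot x$, \dots) is actually defined before invoking the corresponding axiom, which is exactly what the computations above arrange.
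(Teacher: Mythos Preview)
Your proposal is correct and follows essentially the same route as the paper: factor the orbit map through $G^x/G_x$, use Proposition~\ref{propgx}(3) to see that this quotient is Polish, and apply the Lusin--Souslin theorem to the resulting continuous injection into $\X$. The paper states this in a single sentence without spelling out the well-definedness and injectivity checks for $\bar m^x$ (which you supply in detail), but the argument is the same.
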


\proof By Lemma \ref{propgx}, $G^x\cdot x$ is the continuous and injective image of the Polish space  $G^x/ G_x$, then it is Borel by Lusin-Souslin's Theorem  (see \cite[Theorem 15.1]{KE}).
\endproof


\begin{teo}\label{ef2} Let $G$ be a Polish group and $m$ be a continuous partial action of  $G$ on the Polish space $\X$. Let  $x\in \X$ such that $G^x$ is $G_\delta$  and $G_x$ is closed.  Consider the following assertions.
\begin{enumerate}
\item The map 
\begin{equation*}
\phi \colon  G^x/G_ { x} \ni gG_{x}\to g\cdot  x\in G^x\cdot x
\end{equation*}
is a homeomorphism.

\item  $ G^x\cdot x$ is $G_\delta.$

\item $ G^x\cdot x$ is not meager in its relative topology.

\end{enumerate}
Then $(1)\Rightarrow (2) \Rightarrow (3)$. Moreover, if $G^x$ is open, then  all the assertions are equivalent.
\end{teo}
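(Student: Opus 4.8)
The plan is to prove $(1)\Rightarrow(2)\Rightarrow(3)$ under the standing hypotheses, and then, assuming in addition that $G^x$ is open, to close the cycle with $(3)\Rightarrow(1)$.

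\textbf{$(1)\Rightarrow(2)$ and $(2)\Rightarrow(3)$.} Since $G^x$ is $G_\delta$ and $G_x$ is closed, Proposition \ref{propgx}(3) gives that $G^x/G_x$ is Polish. If $\phi$ is a homeomorphism, then $G^x\cdot x$ with its relative topology is Polish, and a Polish subspace of the Polish space $\X$ must be $G_\delta$ in $\X$ (see \cite[Theorem 3.11]{KE}); this is $(2)$. For $(2)\Rightarrow(3)$, a $G_\delta$ subset of a Polish space is Polish in its relative topology, hence a Baire space, hence not meager in itself.

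\textbf{$(3)\Rightarrow(1)$ when $G^x$ is open.} The idea is to restrict $m$ to the orbit $\Y:=G^x\cdot x$ and invoke the open mapping Theorem \ref{Teo.m_x.abierta}. First one checks that $\Y$ is invariant in the sense that $\ex g\cdot y$ with $y\in\Y$ forces $g\cdot y\in\Y$: writing $y=h\cdot x$ with $h\in G^x$ and applying (PA2) one gets $\ex (gh)\cdot x$ and $g\cdot y=(gh)\cdot x\in G^x\cdot x$. Hence $m$ restricts to a partial action $m|_\Y$ on $\Y$ whose domain is $(G*\X)\cap(G\times\Y)$ and whose pieces are the restrictions of the $m_g$ to the open sets $\X_g\cap\Y$; thus $m|_\Y$ is a continuous partial action of $G$ on $\Y$. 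It is transitive, since for $y=g\cdot x$ and $z=h\cdot x$ in $\Y$ one has $\ex (hg^{-1})\cdot y$ and $(hg^{-1})\cdot y=z$ by (PA1)–(PA2). Because the domain sets $G^y$ relative to $m|_\Y$ coincide with those relative to $m$, and $G^x$ is open, Lemma \ref{gxg}(ii) gives that $G^y$ is open for every $y\in\Y$. Finally $\Y$ is Hausdorff, being a subspace of the metrizable $\X$, and by $(3)$ it is non-meager, so Theorem \ref{Teo.m_x.abierta} applies to $m|_\Y$ and yields that $m^x\colon G^x\to\Y$ is open. To conclude, recall from Proposition \ref{propgx}(1) that $\pi_p\colon G^x\to G^x/G_x$ is a continuous open surjection and that $\phi\circ\pi_p=m^x$. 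A continuous open surjection is a quotient map, so $\phi$ is continuous; and for $U\subseteq G^x/G_x$ open, $\phi(U)=m^x(\pi_p^{-1}(U))$ is open in $\Y$, so $\phi$ is open. Since $\phi$ is also a bijection (well defined and injective by (PA1)–(PA2), exactly as in Theorem \ref{global-transitiva}), it is a homeomorphism.

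\textbf{Main obstacle.} The delicate part is the passage to the orbit: one must verify carefully that $\Y=G^x\cdot x$ is invariant for $m$, that the restricted family really is a continuous topological partial action on $\Y$ in the sense of Definition \ref{topar}, and—crucially—that the non-meagerness hypothesis required by Theorem \ref{Teo.m_x.abierta} is precisely assertion $(3)$; the remaining point is the propagation of openness of $G^x$ to all $G^y$ with $y$ in the orbit, which is handled by the transitivity of $m|_\Y$ together with Lemma \ref{gxg}(ii).
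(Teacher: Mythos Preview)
Your proof is correct and follows essentially the same route as the paper: both obtain $(1)\Rightarrow(2)$ from Proposition~\ref{propgx}(3), note that $(2)\Rightarrow(3)$ is standard, and for $(3)\Rightarrow(1)$ restrict $m$ to the orbit $\Y=G^x\cdot x$, apply Theorem~\ref{Teo.m_x.abierta} to conclude that $m^x$ is open, and then use the factorization $m^x=\phi\circ\pi_p$ together with Proposition~\ref{propgx}(1). Your version simply spells out in more detail the invariance of $\Y$, the verification that the restricted action satisfies the hypotheses of Theorem~\ref{Teo.m_x.abierta}, and the bijectivity and continuity of $\phi$, all of which the paper leaves implicit.
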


\proof 
Fix $x\in \X.$
Since $G^x$ is $G_\delta$   and $G_x$ is closed,  then by (3) of Proposition \ref{propgx} we have  $(1)\Rightarrow (2).$ It is clear that  $(2) \Rightarrow (3)$. Now suppose that $G^x$ is open. It is clear that $\phi$ is a  continuous bijection, so to see that 
  $(3)\Rightarrow (1)$ it suffices to show that $\phi$ is  open. Let $\Y=G^x\cdot x$ and $\overline{m}:G*\Y\to \Y$ the restriction of $m.$ Then $\overline{m}$ is a continuous transitive partial action with $\overline{m}^x=m^x.$ Since  $m^x=\phi\circ \pi_p,$  the result follows from  Proposition \ref{propgx} (1) and Theorem \ref{Teo.m_x.abierta}. 
 \endproof

 The following result is a generalization of the fact commented in the introduction. If a Polish space $\X$  admits a transitive action of a Polish group $G$, then $\X$ is homeomorphic to the coset space $G/G_x$ for any $x\in \X$.

\begin{cor}
Let $G$ be a Polish  group and $m$  a  continuous transitive partial  action of $G$ on a not meager Hausdorff space $\X$ such that $G*\X$ is open and $G_x$ is closed for some  $x\in \X$. Then $\X$ is homeomorphic to the coset space $G^x/G_x$.
\end{cor}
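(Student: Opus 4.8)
The corollary is the transitive instance of the implication $(3)\Rightarrow(1)$ of Theorem \ref{ef2}: since $m$ is transitive one has $G^x\cdot x=\X$, so the coset map lands on all of $\X$, and assertion (3) of that theorem holds because $\X$ is not meager in its own topology. The plan is to produce the natural bijection $\phi\colon G^x/G_x\to\X$, $gG_x\mapsto g\cdot x$, and to show that it is a homeomorphism by means of the factorization $m^x=\phi\circ\pi_p$, following the argument used for Theorem \ref{ef2}.

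First I would observe that the hypothesis that $G*\X$ is open makes $G^x$ open in $G$ for every $x\in\X$, since $G^x$ is the preimage of $G*\X$ under the continuous map $g\mapsto(g,x)$; this is the only place where that hypothesis enters. Consequently all the hypotheses of the open mapping Theorem \ref{Teo.m_x.abierta} are in force ($G$ Polish, $m$ continuous and transitive, $\X$ not meager and Hausdorff, $G^y$ open for some $y$), so $m^x\colon G^x\to\X$ is open, and transitivity makes it onto. Next I would check that $\phi$ is a well-defined bijection, which is the usual bookkeeping with the domain conditions $\exists\,g\cdot x$: if $g\in G^x$ and $h\in G_x$ then $(gh)^{-1}g=h^{-1}\in G_x$, so $gh\in G^x$ by Lemma \ref{gxg}(i) and $(gh)\cdot x=g\cdot(h\cdot x)=g\cdot x$ by (PA2); injectivity follows from (PA1) and (PA2), and surjectivity is transitivity. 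This is exactly the verification already made for $\phi$ in Theorem \ref{ef2}.

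Then, with $\pi_p\colon G^x\to G^x/G_x$ the quotient map, which is continuous and open (the proof of Proposition \ref{propgx}(1) uses only the group structure together with Lemma \ref{gxg}(i), so metrizability of $\X$ plays no role there), the identity $m^x=\phi\circ\pi_p$ finishes the proof: $\phi$ is continuous because $\pi_p$ is a quotient map and $m^x$ is continuous, and $\phi$ is open because, for $V\subseteq G^x/G_x$ open, $\phi(V)=\phi(\pi_p(\pi_p^{-1}(V)))=m^x(\pi_p^{-1}(V))$ is open by openness of $m^x$ and surjectivity of $\pi_p$. Hence $\phi$ is a homeomorphism and $\X\simeq G^x/G_x$. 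I expect the only mildly delicate point to be the domain bookkeeping for $\phi$ in the middle paragraph; the rest is a direct application of Theorem \ref{Teo.m_x.abierta} and Proposition \ref{propgx}(1), exactly as in the proof of Theorem \ref{ef2}. I would also note in passing that the hypothesis that $G_x$ is closed is not needed for this argument itself; together with $G^x$ being open (hence $G_\delta$) it guarantees, via Proposition \ref{propgx}(3), that $G^x/G_x$---and therefore $\X$---is Polish.
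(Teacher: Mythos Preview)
Your argument is correct and is, in effect, the proof of the implication $(3)\Rightarrow(1)$ in Theorem~\ref{ef2} specialized to the transitive case and run directly under the hypotheses at hand: $G^x$ open (from $G*\X$ open), $\X$ Hausdorff and non-meager, so Theorem~\ref{Teo.m_x.abierta} applies; then the factorization $m^x=\phi\circ\pi_p$ together with openness and continuity of $\pi_p$ yields that $\phi$ is a homeomorphism.

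The paper proceeds differently. Its two-line proof first invokes Corollary~\ref{Xpolaco} to conclude that $\X$ is Polish, and then appeals to Theorem~\ref{ef2} as a black box. Your route is more self-contained: you bypass the detour through $\X_G$ and do not need to know that $\X$ is Polish in advance (nor that $\X_G$ is Hausdorff, which is the standing hypothesis of Corollary~\ref{Xpolaco}). Conversely, the paper's route packages the work into two previously established statements. Note also that, as you point out, the hypothesis that $G_x$ is closed plays no role in producing the homeomorphism in your argument; it only serves (via Proposition~\ref{propgx}(3)) to conclude afterward that $G^x/G_x$, and hence $\X$, is Polish. In the paper's route that hypothesis is used through Theorem~\ref{ef2}, whose statement assumes $G_x$ closed.

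One small cosmetic point: in your well-definedness check you compute $(gh)^{-1}g=h^{-1}$ and then cite Lemma~\ref{gxg}(i). The cleaner way to match that lemma's form is to take the pair $(g,gh)$ and note $g^{-1}(gh)=h\in G_x$, whence $gh\in G^x$; your conclusion is of course the same.
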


\proof By Corollary \ref{Xpolaco} the space $\X$ is Polish, then the results follows   by Theorem \ref{ef2}.
\endproof

Now we present  an example to illustrate  Theorems \ref{ef} and \ref{ef2}.

\begin{exe}{\bf The flow of a differentiable vector field} \cite[Example 1.2]{AB}.
Consider a smooth vector field $V \colon \X\to T\X$ on a  manifold $\X$ such that $\X$ is a Polish space. For $x\in \X$, let $\gamma_x$ be the corresponding integral curve through $x$ defined on its maximal interval $(a_x,b_x),$ that is, $\gamma_x(0)=x$ and $\gamma'_x(t)=V(\gamma_x(t))$ for all $t\in (a_x,b_x).$ For $t\in \mathbb{R}$, let  $\X_{-t}=\{x\in \X\mid t\in (a_x,b_x)\}$ and set $m_t\colon \X_{-t}\ni x\to\gamma_x(t)\in \X_t.$ Then the family $m=\{m_t\colon \X_{-t}\to \X_t\}_{t\in \mathbb{R}}$ defines a continuous  partial action of the additive group  $G=\mathbb{R}$ on $\X.$  Maximal integral curves are either constant, injective or periodic (see \cite[Exercise 9-1]{Lee}). Then for each $x\in \X,$  we have that 
$$
G_x=\left\lbrace t\in (a_x,b_x) : \gamma_x(t)=x\right\rbrace
$$
is a closed set. Additionally,  the image of $\gamma_x$ is diffeomorphic to $\R,$ $S^1,$ or $\R^0$, thus $G^x\cdot x=\gamma_x\left( (a_x,b_x)\right)$ is Polish (see \cite[Exersice 9-1 (c)]{Lee}). 
\end{exe}

The following example shows that the condition $G^x$ to be Polish in Theorem \ref{ef2}  is necessary.

\begin{exe} Consider the continuous partial action of $G=\mathbb{R}$ on itself given by $\X_g=\emptyset$ if $g\in \mathbb{R}\setminus\mathbb{Q},$  $\X_g=\mathbb{R}$ if $g\in \mathbb{Q},$ and $m_g\colon \mathbb{X}_{-g}\ni a\to g+a\in \mathbb{X}_g,$ for $g\in \mathbb{Q}.$ Then for any $x\in  \mathbb{Q}$  one has that ${G}^x=\mathbb{Q}$  which is not a Polish subset of $\mathbb{R}.$ Moreover $G_x=\{0\}$  is closed in $\mathbb{R}$ and the map 
$$
G^x/G_x \ni [r] \mapsto r+x\in G^x\cdot x
$$ 
is a homeomorphism. But  $G^x\cdot x=\mathbb{Q}$ is  meager in itself.
\end{exe}


\end{document}